\newtheorem{theorem}{Theorem}[section]
\newtheorem{corollary}[theorem]{Corollary}
\newtheorem{lemma}[theorem]{Lemma}
\newtheorem{proposition}[theorem]{Proposition}
\theoremstyle{definition}
\newtheorem{example}[theorem]{Example}
\newtheorem{remark}[theorem]{Remark}
\numberwithin{equation}{section}
\newcommand*{\cf}{C_{\phi}}
\newcommand*{\hh}{\mathcal{H}}
\newcommand*{\h}{\mathsf{h}}
\newcommand*{\B}[1][\hh]{\boldsymbol{B}(#1)}
\newcommand*{\ascr}{\mathscr{A}}
\newcommand*{\bscr}{\mathscr{B}}
\newcommand*\nbb{\mathbb{N}}
\newcommand*\rbb{\mathbb{R}}
\newcommand*\C{\mathbb{C}}
\newcommand*\zbb{\mathbb{Z}}
\newcommand*\Rng{\mathcal{R}}
\newcommand*\Ker{\mathcal{N}}
\newcommand*{\pp}[1]{^{[#1]}}
\def\RngInf{\operatorname{\Rng^\infty}}
\newcommand{\ushift}[1]{#1 \sim \{#1_n\}_{n=0}^\infty}
\begin{document}

\title[The Wold-type decomposition for $m$-isometries]{The Wold-type decomposition for $m$-isometries}
\author{Jakub Ko\'smider}
\address{Instytut Matematyki, Uniwersytet Jagiello\'nski, ul. \L{}ojasiewicza 6, PL-30348 Kra\-k\'ow, Poland}
\email{jakub.kosmider@im.uj.edu.pl}
\thanks{}
\dedicatory{}

\subjclass[2010]{Primary 47B20; Secondary 47B33, 47B37}
\keywords{Wold-type decomposition, m-isometry, unitary equivalence, composition operators, spectral measure}

\begin{abstract}
The aim of this paper is to study the Wold-type decomposition in the class of $m$-isometries.
One of our main results establishes an equivalent condition for an analytic $m$-isometry to admit the Wold-type decomposition for $m\ge2$. 
In particular, we introduce the $k$-kernel condition which we use to characterize analytic $m$-isometric operators which are unitarily equivalent to unilateral operator valued weighted shifts for $m\ge2$. 
As a result, we also show that $m$-isometric composition operators on directed graphs with one circuit containing only one element are not unitarily equivalent to unilateral weighted shifts.
We also provide a characterization of $m$-isometric unilateral operator valued weighted shifts with positive and commuting weights.
\end{abstract}

\maketitle

\section{Introduction}

In 1961 Halmos published a paper concerning shift operators on Hilbert spaces (see \cite{halmos-1961}).
The paper introduced the notion of the wandering subspace and its connections with invariant subspaces of unilateral and bilateral shifts. 
This notion is closely related to the famous Wold decomposition of isometries which allows to represent an isometry as orthogonal sum of a unitary operator and a unilateral weighted shift (cf. \cite[Chapter I, Theorem 1.1]{N-F-2010}).
Later on, Shimorin introduced a more general property named the Wold-type decomposition which generalizes the classical Wold decomposition and proved in \cite[Theorem~3.6]{shimorin-2001} that it applies to a broader class of operators than isometries.
To prove some of the results, Shimorin utilized a concept from the paper of Richter from 1988, who showed that an analytic $2$-concave operator has the wandering subspace property.
The idea was based on a specific operator series employing a particular left-inverse of an operator  (see \cite[Theorem 1]{richter-1988}).
In 2019 Anand et al.\ characterized non-unitary $2$-isometries which admit the Wold-type decomposition having their non-unitary parts unitarily equivalent to unilateral operator valued weighted shifts (see \cite[Theorem 2.5]{A-C-J-S-2017}).
Their description of these operators exhibits the form of the weights and allows one to construct interesting non-unitary $2$-isometries.
This result shows a greater resemblance of the classical Wold decomposition.

The aforementioned notions are closely related to each other as if an operator admits the Wold-type decomposition, then its non-unitary part has the wandering subspace property.
In recent years authors published interesting results concerning the Wold-type decomposition and the wandering subspace property in the $m$-isometric context. 
Some of them focus on finding analytic $m$-isometries without the wandering subspace property (see \cite{A-C-T-2018}), whereas others focus on more general results (see \cite{A-C-J-S-2017}).

The aim of this paper is to adopt the Wold-type decomposition in the class of $m$-isometries for $m\ge2$. 
We focus on finding conditions under which these operators admit the aforementioned decomposition. 
In order to do so we, in particular, introduce the $k$-kernel condition in the $m$-isometric context and demonstrate its connection with the Wold-type decomposition.  

The paper is organized as follows. 
Section 2 is a short introduction to the $k$-kernel condition and aggregates facts concerning its properties for bounded operators. 
In the beginning, we discuss the properties of $m$-isometric operators satisfying the $(m-1)$-kernel condition for $m\ge2$.
As a result, in Proposition \ref{pro:analytic-inclusions} we obtain an interesting property regarding inclusions of certain images of the kernel of the adjoint under powers of an operator, which is vital in the sequel.
We also establish a characterization of the $k$-kernel condition in the class of left-invertible composition operators (see Theorem \ref{thm:composition-kernel-condition}).

In Section 3 we introduce the completion problem for $m$-isometric unilateral operator valued weighted shifts with positive and commuting weights. 
Our approach based on the spectral theory enables us to establish a handy characterization of $m$-isometric unilateral operator valued weighted shifts (see Theorem \ref{thm:unilateral-m-iso}).
This characterization is used later in the proof of Corollary \ref{cor:completion-problem} to provide a solution the $m$-isometric completion problem in the case $m-1$ initial weights are given.

Finally, Section 4 is devoted to the Wold-type decomposition for $m$-isometric operators for $m\ge2$.
Theorem \ref{thm:analytic-m-iso} is the main result of this section and provides an equivalent condition for analytic $m$-isometry to satisfy the $(m-1)$-kernel condition. 
This fact exhibits an interesting interplay between the wandering subspace property and $(m-1)$-kernel condition in the class of $m$-isometries.
Thanks to this development we obtain the positive answer to \cite[Question 1.2]{A-C-T-2018} in some subclass of operators (see Corollary~\ref{cor:wold-type-answer}).
Furthermore, we show in Theorem \ref{thm:analytic-m-iso-shifts} that $m$-isometries satisfying $(m-1)$-kernel condition are unitarily equivalent to unilateral operator valued weighted shifts.
We conclude this section with Corollary \ref{cor:composition-unitary-equivalence} that proves that the class of analytic $m$-isometric composition operators on a directed graph with one circuit having exactly one element is disjoint from the class of unilateral operator valued weighted shifts.

\section{Properties of the $k$-kernel condition}

In what follows, by $\nbb$, $\zbb_+$, $\rbb_+$ we denote the set of positive integers, non-negative integers and non-negative real numbers, respectively.
Throughout the paper $\hh$ denotes a nonzero complex Hilbert space.
If $x$, $y\in\hh$, then $\langle x,y\rangle$ is their inner product.
If $M$ is a closed linear subspace of $\hh$, then $P_M$ is the corresponding orthogonal projection of $\hh$ onto $M$ and $M^\perp$ is the orthogonal complement of $M$ in $\hh$.
For a subset $A\subseteq\hh$ by $\bigvee A$ we understand the closed linear span of $A$, that is, the smallest closed subspace containing $A$.
If $\{M_n\}_{n=0}^\infty\subseteq\hh$ is a sequence of subspaces, then we define
$\bigvee\{M_n\}_{n=0}^\infty=\bigvee\Big(\bigcup_{n=0}^\infty M_n\Big)$.

As usual $\B$ stands for the algebra of all bounded linear operators acting on $\hh$ and $I\in\B$ is the identity operator.
Suppose that $T\in\B$.
By $\Ker(T)$, $\Rng(T)$, and $T^*$ we denote the kernel, the range and the adjoint of $T$, respectively.
If there exists $S\in\B$ such that $ST=I$, then we say the $T$ is \textit{left-invertible}. 
It is surely folklore that left-invertible operators have closed ranges and any power of a left-invertible operator is again left-invertible.
For $k\in\nbb$ we denote the operator $T^{*k}T^k$ by $T\pp{k}$.
Recall that an operator $T\in\B$ is said to be \textit{analytic}, if $\RngInf(T):= \bigcap_{n=0}^\infty T^n(\hh)$ is a null space. 

\begin{remark}
	\label{rem:invertiblity}
	Observe that, if $T\in\B$ is left-invertible, then for any $k\in\nbb$ the operator $T\pp{k}$ is invertible.
	This follows from the fact that for any left-invertible operator $S\in\B$ the operator $S\pp{1}$ is invertible and the following equality
	\begin{align*}
		T\pp{k} = T^{*k}T^k=(T^k)^*T^k = (T^k)\pp{1}, \quad k\in\nbb.
	\end{align*}
\end{remark}

Assume $m\in\nbb$. 
Operator $T\in\B$ is said to be an \textit{$m$-isometry} if 
\begin{align}
	\label{eq:m-iso-condition}
	\sum_{p=0}^m (-1)^p \binom{m}{p} T\pp{p} = 0.
\end{align}
Observe that we can equivalently define $T\in\B$ to be an $m$-isometry if and only if for every $k\ge m$ it follows that
\begin{align}
	\label{eq:m-iso-inverse}
	T\pp{k} = (-1)^{m+1} \sum_{p=0}^{m-1} (-1)^p \binom{m}{p} T\pp{k-m+p}.
\end{align}
It is well known that $m$-isometries are left-invertible.
If $m\ge2$, then an $m$-isometry is said to be \textit{strict}, if it is not an $(m-1)$-isometry.
For more introductory information regarding $m$-isometric operators and their properties the reader is referred to \cite{A-S-1995}.

We say that $T\in\B$ satisfies the \textit{kernel condition} if $\Ker(T^*)$ is invariant for $T\pp{1}$.
This notion was introduced in \cite{A-C-J-S-2017}  and studied in the context of $2$-isometries.
We extend it to the $k$-kernel condition, namely, $T$ satisfies the $k$-\textit{kernel condition} for some $k\in\nbb$, if $\Ker(T^*)$ is invariant for $T\pp{n}$ for all $n\in\{1,\dots,k\}$.
It is an easy observation that the kernel condition coincides with the $1$-kernel condition.
Furthermore, if $T\in\B$ satisfies the $k$-kernel condition for some $k\ge2$, then it also satisfies the $(k-1)$-kernel condition.
Finally, observe that $T\in\B$ satisfies the $k$-kernel condition if and only if $\Ker(T^*)$ is reducing for  $T\pp{n}$ for all $n\in\{1,\dots,k\}$

It is a direct consequence of the kernel-range decomposition that in the class of invertible operators all of the above conditions are trivially satisfied.
Also it is easy to prove that isometries and unilateral weighted shifts satisfy the $k$-kernel condition for every $k\in\nbb$.
On the other hand, the situation is more interesting in the class of analytic operators as the kernel of the adjoint of an analytic operator may not be a null space.
Hence, it is worth searching for examples there.

The first fact we prove shows that in the class $m$-isometries the $(m-1)$-kernel condition automatically implies the $k$-kernel condition for every $k\in\nbb$.

\begin{lemma}
	\label{lem:kernel-condition}
	Suppose that $T\in\B$ is an $m$-isometry for $m\ge2$.
	Then $T$ satisfies the $(m-1)$-kernel condition if and only if $T$ satisfies the $k$-kernel condition for every $k\in\nbb$.
\end{lemma}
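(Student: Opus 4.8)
The plan is to settle the trivial implication immediately and devote the work to the forward one, which I would prove by induction on $k$ driven entirely by the recurrence \eqref{eq:m-iso-inverse}. For the backward implication there is nothing to do: if $T$ satisfies the $k$-kernel condition for every $k\in\nbb$, then choosing $k=m-1$ yields the $(m-1)$-kernel condition at once. So assume henceforth that $T$ is an $m$-isometry satisfying the $(m-1)$-kernel condition, meaning that $\Ker(T^*)$ is invariant for each of $T\pp{1},\dots,T\pp{m-1}$, and aim to upgrade this to invariance for $T\pp{n}$ with $n$ arbitrarily large.

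The mechanism rests on a single elementary observation combined with the recurrence. The observation is that invariance of a fixed closed subspace persists under linear combinations: if operators $A_1,\dots,A_r\in\B$ each leave $\Ker(T^*)$ invariant, then so does any scalar combination $\sum_j c_j A_j$. Now \eqref{eq:m-iso-inverse}, valid for every index $\ge m$, exhibits $T\pp{k}$ as a fixed linear combination of the $m$ consecutive operators $T\pp{k-m},T\pp{k-m+1},\dots,T\pp{k-1}$, so the idea is to feed it the already-known invariances and propagate them upward. For the base case take $k=m$: the recurrence writes $T\pp{m}$ as a linear combination of $T\pp{0},T\pp{1},\dots,T\pp{m-1}$, and since $T\pp{0}=I$ trivially leaves $\Ker(T^*)$ invariant while the remaining terms do so by the $(m-1)$-kernel condition, the observation yields the $m$-kernel condition. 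For the inductive step, suppose $T$ satisfies the $k$-kernel condition for some $k\ge m$, so that $\Ker(T^*)$ is invariant for $T\pp{1},\dots,T\pp{k}$; applying \eqref{eq:m-iso-inverse} at index $k+1$ expresses $T\pp{k+1}$ through $T\pp{k+1-m},\dots,T\pp{k}$, and because $k\ge m$ forces $k+1-m\ge1$, all these indices lie in $\{1,\dots,k\}$, whence the observation gives invariance for $T\pp{k+1}$. This is precisely the $(k+1)$-kernel condition, and the induction closes.

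Regarding the main obstacle: there is no genuine analytic difficulty here, since both facts invoked---the recurrence for $m$-isometries and the stability of invariance under linear combinations---are elementary. The only points demanding care are clerical: verifying that the index $k+1-m$ stays at least $1$ throughout the induction, so that the right-hand side of \eqref{eq:m-iso-inverse} never refers to an operator whose invariance is not yet known, and remembering in the base case to include the term $T\pp{0}=I$, whose invariance is automatic but must not be overlooked.
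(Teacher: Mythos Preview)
Your proof is correct and follows essentially the same route as the paper's: an induction on $k$ driven by the recurrence \eqref{eq:m-iso-inverse}, expressing $T\pp{k}$ as a linear combination of lower-index operators already known to leave $\Ker(T^*)$ invariant. The paper's version is terser---it does not separate out a base case nor track the lower index bound---but the underlying argument is identical.
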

\begin{proof}
	Assume that $T$ satisfies the $(m-1)$-kernel condition and let $k\ge m$.
	We use an induction argument. 
	Suppose that $T$ satisfies the $(k-1)$-kernel condition.
	To finish the proof it is enough to note that, if $f\in\Ker(T^*)$, then
	\begin{align*}
		T\pp{k} f  \overset{\eqref{eq:m-iso-inverse}} = (-1)^{m+1} \sum_{p=0}^{m-1} (-1)^p \binom{m}{p} T\pp{k - m+p} f \in \Ker(T^*),
	\end{align*}
	since $k-m+p\le k-1$ for $p\in\{0,\dots, m-1\}$.
\end{proof}

The following result is a consequence of the above lemma and provides an interesting inclusion between certain subspaces for $m$-isometric operators satisfying the $(m-1)$-kernel condition.

\begin{proposition}
	\label{pro:analytic-inclusions}
	Suppose $m\ge 2$ and $T\in\B$ is an $m$-isometry satisfying the $(m-1)$-kernel condition.
	Then $T^{n-1}(\Ker(T^*))\subseteq\Ker(T^{*n})$ for every $n\in\nbb$.
\end{proposition}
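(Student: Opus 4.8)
The plan is to reduce the asserted inclusion to a single algebraic identity and then invoke Lemma~\ref{lem:kernel-condition}. Fix $n\in\nbb$ and $f\in\Ker(T^*)$; what must be shown is that $T^{n-1}f\in\Ker(T^{*n})$, that is, $T^{*n}T^{n-1}f=0$. The case $n=1$ is immediate, since then $T^{n-1}f=f$ and $\Ker(T^{*n})=\Ker(T^*)$, so I would dispose of it separately and assume $n\ge2$ from here on.

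For $n\ge2$ the key observation is the factorization
\begin{align*}
	T^{*n}T^{n-1} = T^*\bigl(T^{*(n-1)}T^{n-1}\bigr) = T^* T\pp{n-1},
\end{align*}
which simply peels off one copy of $T^*$ on the left and recognizes the remaining product as $T\pp{n-1}$. Hence $T^{*n}T^{n-1}f = T^* T\pp{n-1}f$, and it suffices to prove that $T\pp{n-1}f\in\Ker(T^*)$, for then applying $T^*$ annihilates it.

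This last point is exactly where Lemma~\ref{lem:kernel-condition} enters, and it is the only place that requires care. Because $T$ is an $m$-isometry satisfying the $(m-1)$-kernel condition, the lemma upgrades this hypothesis to the $k$-kernel condition for \emph{every} $k\in\nbb$; in particular $\Ker(T^*)$ is invariant for $T\pp{n-1}$ no matter how large $n$ is. Since $f\in\Ker(T^*)$, invariance yields $T\pp{n-1}f\in\Ker(T^*)$, whence $T^*T\pp{n-1}f=0$ and the inclusion follows. I do not expect a genuine obstacle in the computation itself; the subtlety worth flagging is that the bare hypothesis controls $T\pp{n}$ only for $n\le m-1$, so the passage to arbitrary $n$ must be routed through Lemma~\ref{lem:kernel-condition} rather than drawn from the $(m-1)$-kernel condition directly.
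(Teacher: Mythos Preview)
Your proof is correct and follows essentially the same route as the paper's: both reduce to the identity $T^{*n}T^{n-1}=T^*T\pp{n-1}$ and then invoke Lemma~\ref{lem:kernel-condition} to conclude $T\pp{n-1}f\in\Ker(T^*)$. The only cosmetic difference is that the paper handles $n=1$ uniformly (since $T\pp{0}=I$ trivially preserves $\Ker(T^*)$), whereas you single it out as a separate case.
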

\begin{proof}
	Assume that $n\in\nbb$ and $f\in T^{n-1}(\Ker(T^*))$. 
	There exists $h\in\Ker(T^*)$ such that $f=T^{n-1}h$.
	Consequently, 
	\begin{align*}
		T^{*n}f = T^{*n} T^{n-1} h = T^*(T\pp{n-1}) h = 0,
	\end{align*}
	as $T\pp{n-1} h\in\Ker(T^*)$ due to the fact that Lemma \ref{lem:kernel-condition} implies that $T$ satisfies the $(n-1)$-kernel condition.
	This completes the proof.
\end{proof}

Let us recall that, if $M\subseteq\hh$ is a closed subspace and $T\in\B$, then by $T_{|M}$ we denote the restriction of $T$ to the subspace $M$.
The technical result below shows that the $k$-kernel condition behaves very well under taking restriction to a reducing subspace.
We shall use it in the sequel.

\begin{lemma}
	\label{lem:kernel-condition-reducing}
	Suppose that $T\in\B$ satisfies the $k$-kernel condition for some $k\in\nbb$ and $M$ is a non-trivial reducing subspace for $T$.
	Then $T_{|M}$ satisfies the $k$-kernel condition.
\end{lemma}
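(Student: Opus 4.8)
The plan is to reduce the whole statement to two identities---that $(T_{|M})\pp{n}=(T\pp{n})_{|M}$ and that $\Ker\big((T_{|M})^*\big)=M\cap\Ker(T^*)$---after which the hypothesis and the reducing property combine at once. Everything here is routine, so the work is really just bookkeeping with restrictions to a reducing subspace.

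First I would record the algebraic consequences of $M$ reducing $T$. Since $M$ is invariant for both $T$ and $T^*$, it is invariant for every $T^n$ and every $T^{*n}$, and a one-line computation ($\langle T x,y\rangle=\langle x,T^*y\rangle$ for $x,y\in M$, using $T^*y\in M$) shows that the adjoint of the restriction is the restriction of the adjoint, i.e.\ $(T_{|M})^*=(T^*)_{|M}$. From this I get $(T_{|M})^n=(T^n)_{|M}$ and $(T_{|M})^{*n}=(T^{*n})_{|M}$, and composing these restrictions on the invariant subspace $M$ yields
\begin{align*}
	(T_{|M})\pp{n}=(T^{*n})_{|M}(T^n)_{|M}=(T^{*n}T^n)_{|M}=(T\pp{n})_{|M},\quad n\in\nbb.
\end{align*}
The same identity $(T_{|M})^*=(T^*)_{|M}$ also gives $\Ker\big((T_{|M})^*\big)=\{x\in M:T^*x=0\}=M\cap\Ker(T^*)$.

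With these in hand the verification is immediate. I would fix $n\in\{1,\dots,k\}$ and take $x\in M\cap\Ker(T^*)=\Ker\big((T_{|M})^*\big)$. Then $(T_{|M})\pp{n}x=T\pp{n}x$ by the identity above. Because $T$ satisfies the $k$-kernel condition, $\Ker(T^*)$ is invariant for $T\pp{n}$, so $T\pp{n}x\in\Ker(T^*)$; because $M$ reduces $T$, it is invariant for $T\pp{n}=T^{*n}T^n$, so $T\pp{n}x\in M$ as well. Hence $(T_{|M})\pp{n}x\in M\cap\Ker(T^*)=\Ker\big((T_{|M})^*\big)$, which is exactly the assertion that $\Ker\big((T_{|M})^*\big)$ is invariant for $(T_{|M})\pp{n}$ for every $n\in\{1,\dots,k\}$.

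The only point that deserves care---and the one genuine place the hypotheses are used---is the adjoint identity $(T_{|M})^*=(T^*)_{|M}$, which requires $M$ to be invariant for $T^*$, that is, the \emph{full} reducing assumption rather than mere $T$-invariance. Were $M$ only $T$-invariant, $\Ker\big((T_{|M})^*\big)$ would be $M\ominus T(M)$ instead of $M\cap\Ker(T^*)$, and both the power identity and the invariance step would fail; so I would make sure to invoke reducibility precisely at this step. Beyond that there is no real obstacle.
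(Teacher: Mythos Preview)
Your proof is correct and follows essentially the same approach as the paper's own proof: both use the identities $(T_{|M})^*=(T^*)_{|M}$ and $\Ker\big((T_{|M})^*\big)=M\cap\Ker(T^*)$, then compute $(T_{|M})\pp{n}f=T\pp{n}f\in\Ker(T^*)\cap M$. Your version is simply more explicit about the justifications (and adds a helpful remark on why mere $T$-invariance would not suffice), but the argument is the same.
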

\begin{proof}
	Let $n\in\{1,\dots,k\}$.
	Set $S=T_{|M}$ and let $f\in\Ker(S^*)$.
	It is surely folklore that $(T^*)_{|M} = (T_{|M})^*$ and $\Ker(S^*) = M\cap \Ker(T^*)$.
	Furthermore, using the above we can show that
	\begin{align*}
		S\pp{n}f = S^{*n}S^n f= (T_{|M})^{*n} (T_{|M})^n f = T\pp{n}f \in \Ker(T^*) \cap M  = \Ker(S^*).
	\end{align*} 
	Hence the proof is completed.
\end{proof}

It turns out the $k$-kernel condition is an invariant of unitary equivalence.
The proof relies on the fact that for $T$, $S\in\B$ such that $S=UTU^*$ for some unitary $U\in\B$ it is true that $S\pp{n}=UT\pp{n}U^*$ for any $n\in\nbb$ and $U\Ker(T^*)=\Ker(S^*)$.
However, for the reader's convenience we present a direct proof of this fact.

\begin{proposition}
	Suppose $T\in\B$ and $S=UTU^*$ for some unitary operator $U\in\B$. 
	If $T$ satisfies the $k$-kernel condition for some $k\in\nbb$, then so does $S$.
\end{proposition}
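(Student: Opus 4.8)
The plan is to verify directly that $\Ker(S^*)$ is invariant for $S\pp{n}$ for each $n\in\{1,\dots,k\}$, which reduces the whole statement to the two relations flagged in the paragraph preceding the proposition: that $S\pp{n}=UT\pp{n}U^*$ for every $n\in\nbb$, and that $U\Ker(T^*)=\Ker(S^*)$. Once these are in hand, the invariance transfers from $T$ to $S$ essentially by conjugation.

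First I would establish the kernel correspondence. Since $U$ is unitary, $S^*=(UTU^*)^*=UT^*U^*$, and because $U$ and $U^*$ are injective, for $g\in\hh$ one has $S^*g=0$ precisely when $T^*(U^*g)=0$, i.e. when $U^*g\in\Ker(T^*)$, i.e. when $g\in U\Ker(T^*)$; hence $\Ker(S^*)=U\Ker(T^*)$. Next I would compute the powers by a telescoping argument: in $S^n=(UTU^*)^n$ the intermediate factors $U^*U$ collapse to $I$, giving $S^n=UT^nU^*$. Taking adjoints yields $S^{*n}=UT^{*n}U^*$, and therefore $S\pp{n}=S^{*n}S^n=UT^{*n}U^*UT^nU^*=UT^{*n}T^nU^*=UT\pp{n}U^*$.

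To conclude, I would take an arbitrary $g\in\Ker(S^*)$. By the correspondence above $g=Uf$ for some $f\in\Ker(T^*)$, so for each $n\in\{1,\dots,k\}$ we obtain $S\pp{n}g=UT\pp{n}U^*Uf=UT\pp{n}f$. Since $T$ satisfies the $k$-kernel condition, $T\pp{n}f\in\Ker(T^*)$, and consequently $S\pp{n}g\in U\Ker(T^*)=\Ker(S^*)$. This shows that $\Ker(S^*)$ is invariant for every $S\pp{n}$ with $n\le k$, which is exactly the assertion that $S$ satisfies the $k$-kernel condition. The argument is genuinely routine, so there is no serious obstacle; the only point requiring care is that unitarity is invoked twice for two slightly different reasons — once to identify $\Ker(S^*)$ and once to collapse the powers — and that both computations rely on $U^*U=I$ rather than merely $UU^*=I$.
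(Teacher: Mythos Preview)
Your proof is correct and follows essentially the route the paper itself flags just before the proposition: establish $S\pp{n}=UT\pp{n}U^*$ and $\Ker(S^*)=U\Ker(T^*)$, then transfer invariance by conjugation. The paper's written argument phrases the last step dually, showing $S\pp{n}f\perp U\Rng(T)$ via inner products rather than invoking $\Ker(S^*)=U\Ker(T^*)$ directly, but this is only a cosmetic variation of the same idea.
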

\begin{proof}
	Assume that $T$ satisfies the $k$-kernel condition for some $k\in\nbb$ and 
	observe that $\Ker(S^*) = \Rng(S)^\perp = (U\Rng(T))^\perp$. 
	Thus
	\begin{align}
		\label{eq:kc-scalar-product}
		0 = \langle Uh, f\rangle, \quad f\in\Ker(S^*), h\in\overline{\Rng(T)},
	\end{align}
	as the inner product is continuous.
	To prove that $S$ satisfies the $k$-kernel condition assume that $f\in\Ker(S^*)$.
	It is easily seen that $S\pp{n}=UT\pp{n}U^*$ for $n\in\zbb_+$. 
	Observe that, since $T$ satisfies the $k$-kernel condition, the kernel-range decomposition implies that $T\pp{n} g\in \overline{\Rng(T)}$ for every $g\in\Rng(T)$ and every $n\in\{1,\dots, k\}$.
	Hence
	\begin{align*}
		\langle Ug, S\pp{n}f\rangle = \langle g, T\pp{n}U^*f\rangle = \langle UT\pp{n} g, f\rangle 
		\overset{\eqref{eq:kc-scalar-product}}= 0, \quad g\in\Rng(T), n\in\{1,\dots, k\}.
	\end{align*}
	This implies that $S\pp{n}f\in\Ker(S^*)$ for $n\in\{1,\dots, k\}$ and completes the proof.
\end{proof}

Before presenting the next result we recall necessary definitions and notation related to composition operators adopted from \cite{nordgren-1978, J-K-2019}.

Let $(X,\ascr,\mu)$ be a measure space such that $\mu$ is non-negative and $\sigma$-finite.
If $X$ is countably infinite, all singletons are measurable and $\mu(\{x\})\in(0,\infty)$ for every $x\in X$, then we say that $(X,\ascr,\mu)$ is a  \textit{discrete measure space}.
If $(X,\ascr,\mu)$ is a discrete measure space and $x\in X$, then we define $\mu(x):=\mu(\{x\})$. 

From now on denote by $\phi\colon X\to X$ a measurable mapping that is also \textit{nonsingular}, i.e.\ $\mu\circ\phi^{-1}$ is absolutely continuous with respect to $\mu$. 
Operator $\cf\in\B[L^2(\mu)]$ is called a \textit{composition operator}, if $\cf f:=f\circ\phi$ for every $f\in L^2(\mu)$, where $ L^2(\mu)=L^2(X,\ascr,\mu)$.
Due to nonsingularity of $\phi$, it follows from the Radon-Nikodym theorem that there exists a unique measurable function $\h\colon X\to[0,\infty]$ such that
\begin{align*}
	\mu\circ\phi^{-1}(\Delta) = \int_{\Delta} \h \mathrm{d}\mu, \quad \Delta\in\ascr.
\end{align*}
It is well known that the function $\h$ plays a very important role in theory of composition operators as it allows one to describe many properties of the associated composition operator using properties of $\h$ (see, e.g., \cite{nordgren-1978,J-K-2019,singh-1974}).
For more introductory information regarding composition operators the reader is referred to \cite{nordgren-1978}.

We conclude this section with a generalization of \cite[Proposition~4.8]{J-K-2019} that establishes a characterization of left-invertible bounded composition operators satisfying the $k$-kernel condition for $k\in\nbb$.

\begin{theorem}
	\label{thm:composition-kernel-condition}
	Assume $(X,\ascr,\mu)$ is a discrete measure space, $\phi\colon X\to X$ is a function, $\cf\in\B[L^2(\mu)]$ is left-invertible and $k\in\nbb$.
	Then $\cf$ satisfies the $k$-kernel condition if and only if $\h$ is constant on preimages $\phi^{-i}(\{x\})$ for all $i\in\{1,\dots,k\}$ and for every $x\in X$.
\end{theorem}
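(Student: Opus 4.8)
The plan is to pass to multiplication operators via the standard identities for composition operators on a discrete measure space. First I would record that $\cf^n=C_{\phi^n}$ (where $\phi^n$ is the $n$-fold composition) and that, for any nonsingular self-map $\psi$, the operator $C_\psi^*C_\psi$ is multiplication by the Radon--Nikodym derivative $\mathrm{d}(\mu\circ\psi^{-1})/\mathrm{d}\mu$ (see \cite{nordgren-1978,J-K-2019}); applying this to $\psi=\phi^n$ gives $\cf\pp{n}=M_{\h_n}$, where $\h_n:=\mathrm{d}(\mu\circ\phi^{-n})/\mathrm{d}\mu$ and $\h_1=\h$. A direct computation on singletons likewise yields $(\cf^*f)(x)=\tfrac{1}{\mu(x)}\sum_{y\in\phi^{-1}(\{x\})}f(y)\mu(y)$, so that $f\in\Ker(\cf^*)$ precisely when $\sum_{y\in\phi^{-1}(\{x\})}f(y)\mu(y)=0$ for every $x\in X$. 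Since the nonempty fibres $\phi^{-1}(\{x\})$ partition $X$, this exhibits $\Ker(\cf^*)$ as an orthogonal sum, over $x$, of the hyperplanes $\{\mathbf 1\}^\perp$ inside each $L^2(\phi^{-1}(\{x\}))$.

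Second, I would reduce invariance of $\Ker(\cf^*)$ under a single $\cf\pp{n}=M_{\h_n}$ to pointwise constancy. As $M_{\h_n}$ acts diagonally it preserves the fibrewise decomposition, so invariance holds iff on each fibre $\phi^{-1}(\{x\})$ the self-adjoint operator $M_{\h_n}$ leaves $\{\mathbf 1\}^\perp$ invariant; self-adjointness makes this equivalent to invariance of the complementary line $\C\mathbf 1$, i.e.\ to $\h_n$ being constant on $\phi^{-1}(\{x\})$. (Concretely, testing against $f=\mu(y_1)^{-1}\chi_{\{y_1\}}-\mu(y_2)^{-1}\chi_{\{y_2\}}\in\Ker(\cf^*)$ forces $\h_n(y_1)=\h_n(y_2)$.) Thus $\cf$ satisfies the $k$-kernel condition if and only if $\h_n$ is constant on every $\phi^{-1}$-fibre for each $n\in\{1,\dots,k\}$.

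The remaining and genuinely substantial step converts these conditions on the $\h_n$ into the asserted conditions on $\h$ over the higher fibres, and I would argue by induction on $k$. The base case $k=1$ is immediate since $\h_1=\h$. For the passage from $k-1$ to $k$, the $k$-kernel condition is the conjunction of the $(k-1)$-kernel condition and constancy of $\h_k$ on every $\phi^{-1}$-fibre, and by the inductive hypothesis the former equals constancy of $\h$ on all fibres $\phi^{-i}(\{x\})$ with $i\le k-1$. It therefore suffices to show, under that hypothesis, that $\h_k$ is constant on every $\phi^{-1}$-fibre iff $\h$ is constant on every $\phi^{-k}$-fibre. The key identity is
\begin{align*}
	\h_k(y)\mu(y)=\mu(\phi^{-k}(\{y\}))=\sum_{z\in\phi^{-(k-1)}(\{y\})}\h(z)\mu(z),
\end{align*}
coming from $\phi^{-k}(\{y\})=\phi^{-1}(\phi^{-(k-1)}(\{y\}))$ and the defining property of $\h$. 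Since $\h$ is constant, say equal to $c(y)$, on $\phi^{-(k-1)}(\{y\})$, this collapses to $\h_k(y)=c(y)\,\h_{k-1}(y)$. Here I would invoke Remark \ref{rem:invertiblity}: left-invertibility makes every $\cf\pp{i}=M_{\h_i}$ invertible, so $\h_i>0$ everywhere and each preimage $\phi^{-i}(\{y\})$ is nonempty, legitimizing the division. As the $(k-1)$-kernel condition makes $\h_{k-1}$ constant and positive on each $\phi^{-1}(\{x\})$ (second paragraph), constancy of $\h_k$ on $\phi^{-1}(\{x\})$ is equivalent to constancy of $c(y)$ there; combined with the hypothesis that $\h$ is constant on each $\phi^{-(k-1)}(\{y\})$, this is exactly constancy of $\h$ on $\phi^{-k}(\{x\})=\bigcup_{y\in\phi^{-1}(\{x\})}\phi^{-(k-1)}(\{y\})$, closing the induction. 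The main obstacle is this last step: marshalling the positivity furnished by left-invertibility together with the nested-preimage identity so that single-level constancy of $\h_k$ is upgraded to $\phi^{-k}$-fibre constancy of $\h$.
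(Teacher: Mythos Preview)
Your proposal is correct and follows essentially the same inductive route as the paper's proof: both reduce the $n$-kernel condition to constancy of the multiplier of $\cf\pp{n}$ on $\phi^{-1}$-fibres and then unwind this into constancy of $\h$ on $\phi^{-n}$-fibres via the product/recursive relation $\h_n=(\h\circ\phi^{-(n-1)})\,\h_{n-1}$. The only cosmetic difference is that the paper dualizes to $\Rng(\cf)=\Ker(\cf^*)^\perp$ (citing \cite[Proposition~4.8]{J-K-2019}) and obtains the multiplier of $\cf\pp{n}$ through the operator manipulation $\cf^{*(n-1)}(\h\cdot\cf^{n-1}f)=\cf\pp{n-1}((\h\circ\phi^{-(n-1)})f)$, whereas you go straight to $\cf\pp{n}=(C_{\phi^n})\pp{1}=M_{\h_n}$ and argue with $\Ker(\cf^*)$ directly.
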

\begin{proof}
	First, let us gather the following facts which are required in the proof.
	Observe that it follows from the kernel-range decomposition and left-invertibility of $\cf$ that $\Ker(\cf^*)^\perp=\Rng(\cf)$.
	Moreover, it is a direct consequence of \cite[Proposition~4.8(i)]{J-K-2019} that $f\in\Rng(\cf)$ if and only if $f$ is constant on preimages $\phi^{-1}(\{x\})$ for $x\in X$ and $f\circ\phi^{-1}\in L^2(\mu)$.
	It is known that $\cf\pp{1} f =hf$ for $f\in L^2(\mu)$ (see \cite{singh-1974}). 
	Note that, since $\cf$ is left-invertible, $\phi$ is an onto mapping.
	Indeed, suppose, to the contrary, that $y\in X$ is such that $y\notin\phi(X)$. 
	Then $\cf f=\cf (f + \chi_{y})$ for any $f\in L^2(\mu)$, where $\chi_y\in L^2(\mu)$ is the characteristic function of $\{y\}$.
	This contradicts injectivity of $\cf$.
	
	If $k=1$, then the result follows directly from \cite[Proposition~4.8(ii)]{J-K-2019}, so we limit our consideration to the case when $k\ge2$.
	We prove the required equivalence using an induction argument.
	Assume that $n\in\{2,\dots,k\}$ and the conclusion of the theorem holds for $i\in\{1,\dots,n-1\}$.
	Suppose $\cf$ satisfies the $n$-kernel condition.
	Since $\phi$ is surjective and $\h$ is constant on $\phi^{-i}(\{x\})$ for $i\in\{1,\dots,n-1\}$ for every $x\in X$, relation $\h \circ \phi^{-i}$ is a function for $i\in\{1,\dots,n-1\}$.
	This implies that
	\begin{align}
		\label{eq:composition-derivative}
		\h \circ \phi^{-i}\circ \phi^{i}= \h, \quad i\in\{1,\dots,n-1\}.
	\end{align}
	Observe that 
	\begin{align*}
		\cf\pp{n}f &= \cf^{*(n-1)} \Big( \h \cf^{n-1}f\Big) \\
		&= \cf^{*(n-1)} \Big(\h f\circ \phi^{n-1} \Big) \\ 
		&\overset{\eqref{eq:composition-derivative}}= \cf^{*(n-1)} \cf^{n-1}(\h \circ \phi^{-n+1} f) \\
		&\overset{\eqref{eq:composition-derivative}}= \h (\h \circ\phi^{-1})\dots(\h \circ\phi^{-n+1})f, \quad f\in L^2(\mu).
	\end{align*}
	Since  $\cf$ satisfies the $n$-kernel condition, $\cf\pp{n}f$ is constant on $\phi^{-1}(\{x\})$ for all $f\in\Rng(\cf)$ and for every $x\in X$. Hence $\h$ is constant on $\phi^{-n}(\{x\})$ for every $x\in X$.
	To prove the reverse implication it is enough to invert the argument used above.
	We leave the details to the reader.
	
	The rest of the proof follows directly from the induction argument.
\end{proof}

\section{$m$-isometric unilateral operator valued weighted shifts}

In this section we prove few results related to unilateral operator valued weighted shifts having positive and commuting weights, in particular, we characterize all $m$-isometric operators in the aforementioned class.

We begin with recalling required notation and definitions related to operators given by spectral integrals.
Let $X$ be a set and $\ascr$ be a $\sigma$-algebra of subsets of $X$.
Suppose that $E\colon\ascr\to\B$ is a spectral measure.
Consistently with the definition from Section 2, for a set $A\subseteq X$ we say that a property $\mathscr{W}$ of elements belonging to $A$ holds for $E$-almost every $x\in A$ (which is abbreviated to a.e.\ $[E]$ on $A$), if there exists $\Delta \in\ascr$ such that $E(\Delta)=0$ and $\mathscr{W}$ holds for every $x\in A\setminus\Delta$. 
Define $\hat\C=\C\cup\{\infty\}$ and  
\begin{align*}
	L^\infty(X, E) &= \{f\colon X\to\hat\C\:\: |\:\:f \text{ is measurable and there exists } c>0 \\ 
	&\text{ such that } |f| \le c \text{ a.e.\ }[E]\text{ on }X\}.
\end{align*}
For $f\in L^\infty(X, E)$ we set $\|f\|_\infty = \operatorname{inf}\{c>0 : |f| \le c \text{ a.e.\ }[E]\text{ on }X\}$.

The following lemma aggregates results that can be deduced from \cite[Propositions~4.17-4.18]{schmudgen-2012}.
We shall use it in the sequel.

\begin{lemma} 
	\label{lem:spectral-measure-zero-operator}
	Suppose that $X$ is a nonempty set, $\ascr$ is a $\sigma$-algebra of subsets of $X$, $E\colon\ascr\to\B$ is a spectral measure and $f\in L^\infty(X, E)$.
	Then
	\begin{enumerate}
		\item $\int_X f\mathrm{d}E\in\B$ and $\|f\|_\infty = \| \int_X f\mathrm{d}E \|$,
		\item if $f\ge0$ a.e.\ $[E]$ on $X$, then $\int_X f\mathrm{d}E$ is a positive operator,
		\item $\int_X f\mathrm{d}E = 0$ if and only if $f=0$ a.e.\ $[E]$ on $X$.
	\end{enumerate}	
\end{lemma}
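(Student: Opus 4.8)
The plan is to read all three assertions off the bounded measurable functional calculus attached to the spectral measure $E$, namely the map $\Phi(f):=\int_X f\,\mathrm{d}E$ constructed in \cite[Propositions~4.17--4.18]{schmudgen-2012}. Before doing anything I would replace $f$ by a genuinely $\C$-valued bounded representative; this is legitimate because $f\in L^\infty(X,E)$ forces $\{|f|>c\}$, and in particular $\{f=\infty\}$, to be $E$-null for a suitable $c>0$, and modifying $f$ on an $E$-null set affects none of the conclusions. I expect the single genuinely delicate point to be the isometry identity in (i); once it is in hand, (iii) is immediate and (ii) is purely formal.

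First I would prove (i). For a simple function $f=\sum_{i=1}^n c_i\chi_{\Delta_i}$ with pairwise disjoint $\Delta_i\in\ascr$ one has $\Phi(f)=\sum_{i=1}^n c_iE(\Delta_i)$, and since the projections $E(\Delta_i)$ have pairwise orthogonal ranges a direct computation gives $\|\Phi(f)\|=\max\{|c_i|:E(\Delta_i)\ne0\}$, which is exactly $\|f\|_\infty$. The nontrivial half is the lower bound: if $i_0$ realizes the maximum with $E(\Delta_{i_0})\ne0$, then testing $\Phi(f)$ on a unit vector in $\Rng(E(\Delta_{i_0}))$ yields a vector of norm $|c_{i_0}|$. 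A general $f\in L^\infty(X,E)$ is the $\|\cdot\|_\infty$-limit of simple functions, so $\Phi$ extends to an isometric map, giving $\|\Phi(f)\|=\|f\|_\infty<\infty$ and in particular $\Phi(f)\in\B$. The only subtlety here is that the relevant quantity is the \emph{essential} supremum rather than the ordinary one, which is precisely why the maximum is taken over the indices with $E(\Delta_i)\ne0$ and why the simple-function approximation must be performed modulo $E$-null sets.

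Assertion (iii) then follows at once from (i): one has $\|\Phi(f)\|=\|f\|_\infty$, while by the definition of the essential supremum $\|f\|_\infty=0$ if and only if $f=0$ a.e.\ $[E]$ on $X$, and trivially $\|\Phi(f)\|=0$ if and only if $\Phi(f)=0$. A by-product I would record here for later use is that $\Phi$ respects a.e.\ $[E]$ equality, since $\|\Phi(f_1)-\Phi(f_2)\|=\|f_1-f_2\|_\infty$.

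Finally, for (ii) I would invoke the multiplicativity and $*$-compatibility of $\Phi$ from the same cited propositions, namely $\Phi(fg)=\Phi(f)\Phi(g)$ and $\Phi(\bar f)=\Phi(f)^*$. Assuming $f\ge0$ a.e.\ $[E]$, set $g=\sqrt{f}$, which is real, nonnegative and bounded, hence $g\in L^\infty(X,E)$. Then $\Phi(g)$ is self-adjoint because $g=\bar g$ a.e.\ $[E]$, and $g^2=f$ a.e.\ $[E]$ yields $\Phi(f)=\Phi(g)^2=\Phi(g)^*\Phi(g)$, so that $\langle\Phi(f)x,x\rangle=\|\Phi(g)x\|^2\ge0$ for every $x\in\hh$; thus $\int_X f\,\mathrm{d}E$ is a positive operator.
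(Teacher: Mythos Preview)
Your argument is correct. The paper itself gives no proof of this lemma; it simply states that the three assertions ``can be deduced from \cite[Propositions~4.17--4.18]{schmudgen-2012}'' and moves on. What you have written is precisely the standard derivation one would carry out from those propositions: the isometry $\|\Phi(f)\|=\|f\|_\infty$ via simple-function approximation, positivity via the factorization $\Phi(f)=\Phi(\sqrt{f})^*\Phi(\sqrt{f})$ using the $*$-algebra properties of the functional calculus, and (iii) as an immediate corollary of (i). So your approach is not different from the paper's---it is the paper's approach made explicit.
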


Before moving to the next result we recall some notions from the measure theory.
Suppose that $(\mathcal{X},\ascr)$ and $(\mathcal{Y},\bscr)$ are measure spaces and $\phi\colon\mathcal{X}\to\mathcal{Y}$ is a bijection.
We say that $\phi$ is a \textit{measure isomorphism} (sometimes called a \textit{bi-measurable mapping}) if both $\phi$ and $\phi^{-1}$ are measurable. 
If $\mathcal{X}$ is a topological space, then by $\mathcal{B}(\mathcal{X})$ we denote the $\sigma$-algebra of Borel sets of $\mathcal{X}$.
From now and on $\operatorname{card}(X)$ stands for the cardinality of an arbitrary set $X$.

Although the following is well known to specialists, we provide its proof for the reader's convenience.

\begin{proposition}
	\label{pro:spectral-measure}
	Suppose $\{S_n\}_{n=0}^\infty\subseteq\B$ is a sequence of positive and commuting operators. 
	Then there exist a spectral measure $E\colon\mathcal{B}([0,1])\to\B$ and a sequence $\{\xi_n\}_{n=0}^\infty$ of $\mathcal{B}([0,1])$-measurable bounded functions $\xi_n\colon[0,1]\to\rbb_+$ such that $S_n=\int_{[0,1]} \xi_n \mathrm{d}E$ for all $n\in\zbb_+$.
\end{proposition}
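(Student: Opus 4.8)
The plan is to realize all the operators $S_n$ simultaneously as spectral integrals over the joint spectrum of the family, and then to transport that spectral measure onto $[0,1]$ by a measure isomorphism. First I would let $\mathcal{A}\subseteq\B$ denote the unital $C^{*}$-algebra generated by $\{S_n\}_{n=0}^\infty$. Since each $S_n$ is positive, hence self-adjoint, and the $S_n$ mutually commute, $\mathcal{A}$ is a commutative unital $C^{*}$-algebra; being generated by a countable set, it is norm-separable. Its Gelfand spectrum $\Omega$ is therefore a compact metrizable space, and the commutative Gelfand--Naimark theorem together with the spectral theorem for commutative $C^{*}$-algebras supplies a projection valued spectral measure $F\colon\mathcal{B}(\Omega)\to\B$ with
\begin{align*}
	S_n=\int_\Omega \widehat{S_n}\,\mathrm{d}F,\qquad n\in\zbb_+,
\end{align*}
where $\widehat{S_n}\in C(\Omega)$ is the Gelfand transform of $S_n$. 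Each $\widehat{S_n}$ is continuous, hence bounded and Borel, and since $S_n\ge0$ its spectrum is contained in $\rbb_+$, so $\widehat{S_n}$ takes values in $\rbb_+$.

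Next I would move this data from $\Omega$ to $[0,1]$. As $\Omega$ is compact metrizable it is a standard Borel space of cardinality at most $2^{\aleph_0}$, so by Kuratowski's Borel isomorphism theorem there is a Borel injection $\psi\colon\Omega\to[0,1]$; its image $Y:=\psi(\Omega)$ is a Borel subset of $[0,1]$ and $\psi\colon\Omega\to Y$ is a measure isomorphism for the Borel $\sigma$-algebras (when $\operatorname{card}(\Omega)=2^{\aleph_0}$ one may take $Y=[0,1]$, and when $\Omega$ is countable $Y$ is a countable, hence Borel, set). I would then define $E\colon\mathcal{B}([0,1])\to\B$ by $E(\Delta)=F(\psi^{-1}(\Delta))$ and
\begin{align*}
	\xi_n(t)=
	\begin{cases}
		\widehat{S_n}(\psi^{-1}(t)), & t\in Y,\\
		0, & t\in[0,1]\setminus Y,
	\end{cases}
	\qquad n\in\zbb_+.
\end{align*}
Since $\psi$ is bi-measurable, $\psi^{-1}(\Delta)\in\mathcal{B}(\Omega)$ for every $\Delta\in\mathcal{B}([0,1])$, so $E$ is well defined; that $E$ is a spectral measure, with $E([0,1])=F(\Omega)=I$ and the requisite countable additivity and multiplicativity, follows at once from the corresponding properties of $F$ and the fact that $\Delta\mapsto\psi^{-1}(\Delta)$ preserves all set operations. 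Each $\xi_n$ is then a bounded, $\mathcal{B}([0,1])$-measurable function with values in $\rbb_+$.

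Finally I would verify the representation through the change-of-variables formula for spectral integrals: for a bounded Borel function $g\colon[0,1]\to\C$ one has $\int_{[0,1]} g\,\mathrm{d}E=\int_\Omega g\circ\psi\,\mathrm{d}F$, which is immediate for characteristic functions $g=\chi_\Delta$ from the definition $E(\Delta)=F(\psi^{-1}(\Delta))$ and extends to all bounded $g$ by linearity and a uniform-limit argument. Applying this with $g=\xi_n$ and observing that $\xi_n(\psi(\omega))=\widehat{S_n}(\psi^{-1}(\psi(\omega)))=\widehat{S_n}(\omega)$ for every $\omega\in\Omega$, I obtain
\begin{align*}
	\int_{[0,1]}\xi_n\,\mathrm{d}E=\int_\Omega \xi_n\circ\psi\,\mathrm{d}F=\int_\Omega \widehat{S_n}\,\mathrm{d}F=S_n,
\end{align*}
as required. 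The conceptual core of the argument is the joint spectral theorem of the first step, valid for arbitrary $\hh$ precisely because $\mathcal{A}$ is separable regardless of the separability of $\hh$. The main technical obstacle is the passage to $[0,1]$: one must guarantee a Borel isomorphism of $\Omega$ onto a Borel subset of $[0,1]$, which is exactly where the cardinality of $\Omega$ and the Borel isomorphism theorem enter, and one must check that concentrating $E$ on $Y$ still yields a genuine spectral measure on all of $\mathcal{B}([0,1])$.
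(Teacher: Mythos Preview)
Your argument is correct and follows the same two-step architecture as the paper---first build a joint spectral measure on some Polish space, then transport it to $[0,1]$ via a Borel isomorphism---but the first step is executed differently. The paper obtains the joint spectral measure by applying the scalar spectral theorem to each $S_n$ separately, invoking commutativity of the resulting spectral measures, and then citing a product-spectral-measure theorem (Stochel) to assemble a single measure $F$ on the infinite product $\mathcal{X}=\prod_{n}I_n$; the coordinate projections then serve as the $\xi_n$. You instead pass through the commutative $C^{*}$-algebra generated by $\{S_n\}$ and use the Gelfand representation to get $F$ directly on the spectrum $\Omega$, with $\xi_n=\widehat{S_n}$. Your route avoids the product-measure machinery and is arguably cleaner and more standard from an operator-algebraic viewpoint; the paper's route is more explicit, since the underlying space and the functions are concretely described without reference to characters. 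The transport step is essentially identical: the paper uses the Borel isomorphism theorem as stated in Parthasarathy (for $\mathcal{X}$ of cardinality $\mathfrak{c}$), while you invoke Kuratowski's theorem and correctly handle the countable-$\Omega$ case by landing on a Borel subset of $[0,1]$.
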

\begin{proof}
	It is a direct consequence of the spectral theorem that for all $n\in\zbb_+$ there exists $\alpha_n\in(0,\infty)$, spectral measure $E_n\colon\mathcal{B}(I_n)\to\B$ such that $S_n=\int_{I_n} \lambda_n \mathrm{d}E_n$ for some $\mathcal{B}(I_n)$-measurable bounded function $\lambda_n\colon I_n\to\rbb_+$, where $I_n = [0,\alpha_n]$ (see \cite[Theorem~5.1]{schmudgen-2012} for details).
	Moreover, in view of \cite[Corollary 5.6]{schmudgen-2012}, since the operators $\{S_n\}_{n=0}^\infty$ are commuting with each other, so do their spectral measures $\{E_n\}_{n=0}^\infty$.
	As all the above measures are compactly supported, it follows from \cite[Proposition~4]{stochel-1987} that there exists a product spectral measure $F\colon\mathcal{B}(\mathcal{X})\to\B$, where $\mathcal{X}=\prod_{n=0}^{\infty} I_n$ is such that 
	\begin{align}
		\label{eq:spectral-measure-product}
		F\Big(\prod_{i\in I}\Delta_i \times \prod_{i\in\nbb\setminus I}I_i \Big) = \prod_{i\in I} E_i(\Delta_i),
		\quad \Delta_i\in\mathcal{B}(I_i) \text{ for } i\in I, \operatorname{card}(I)<\infty.
	\end{align}
	We infer from \eqref{eq:spectral-measure-product} that $E_n(\Delta_n) = F(\Delta_n\times\prod_{i\in\nbb\setminus \{n\}}I_i)$ for every $n\in\zbb_+$ and $\Delta_n\in\mathcal{B}(I_n)$.
	In view of this, \cite[Theorem~5.4.10]{B-S-1987} implies that 
	\begin{align}
		\label{eq:spectral-form-of-weights}
		S_n = \int_{I_n} \lambda_n \mathrm{d}E_n = \int_{\mathcal{X}} f_n(\lambda) \mathrm{d}F, \quad n\in\zbb_+,
	\end{align}
	where $\lambda=(\lambda_1, \lambda_2, \dots)$ and $f_n\colon\mathcal{X}\to I_n$ are such that $f_n(\lambda)=\lambda_n$ for all $n\in\zbb_+$ and $\lambda\in\mathcal{X}$.
	It follows from \cite[Proposition~8.1.3]{cohn-1980} that $\mathcal{X}$  with the product topology, as a countable product of complete separable metric spaces, is also a complete separable metric space.
	Thus, since $\operatorname{card}(\mathcal{X})=\operatorname{card}([0,1])$, it can be deduced from \cite[Theorem~2.12]{parthasarathy-1967} that there exists a measure isomorphism $\phi\colon\mathcal{X}\to[0,1]$.
	Observe that $f_n\circ\phi^{-1}$ is $\mathcal{B}([0,1])$-measurable for every $n\in\zbb_+$.
	Then, in view of this, \cite[Theorem~5.4.10]{B-S-1987} and \eqref{eq:spectral-form-of-weights} we see that
	\begin{align*}
		S_n = \int_{\mathcal{X}} (f_n \circ\phi^{-1}\circ\phi) \mathrm{d}F = \int_{[0,1]} (f_n\circ\phi^{-1}) \mathrm{d}E, \quad n\in\zbb_+,
	\end{align*}
	where $E\colon\mathcal{B}([0,1])\to\B$ is a spectral measure given by $E(\Delta) = F(\phi^{-1}(\Delta))$ for  $\Delta\in\mathcal{B}([0,1])$.
	This completes the proof.
\end{proof}

In what follows, a sequence of operators $\{S_n\}_{n=0}^\infty\subseteq\B$ is said to be \textit{uniformly bounded} (resp. \textit{uniformly bounded from below}) if there exists $M>0$ (resp. $c>0$) such that $\|S_n\|\le M$ for all $n\in\zbb_+$ (resp. $\|S_nx\|\ge c\|x\|$ for all $x\in\hh$ and $n\in\zbb_+$).

Denote by $\ell^2(\hh)$ the Hilbert space $\oplus_{n=0}^\infty \hh$.
Let us recall that by a \textit{unilateral operator valued weighted shift} we understand an operator $S\in\B[\ell^2(\hh)]$ associated with a sequence of uniformly bounded invertible operators $\{S_n\}_{n=0}^\infty\subseteq\B$ such that 
\begin{align*}
	Se_n(f) &= e_{n+1}(S_nf), \quad f\in\hh, n\in\zbb_+,
\end{align*}
where $e_n(f)$ is the element of $\ell^2(\hh)$ such that $f$ is on $n$-th position and all other positions contain the zero element of $\hh$. 
For simplicity, we denote this by $\ushift{S}$.
It can be easily verified that
\begin{align}
	\label{eq:shift-adjoint-basis}
	S^*e_n(f) = \begin{cases}
		 e_{n-1}(S_n^*f) & \text{if } n>0, \\
		 0 & \text{if } n=0,
	\end{cases} \quad f\in\hh.
\end{align}

Operator $S\in\B[\ell^2(\hh)]$ is called a \textit{diagonal operator} if there exists a uniformly bounded sequence of invertible operators $\{S_n\}_{n=0}^\infty\subseteq\B$ such that 
\begin{align*}
	Se_n(f) &= e_{n}(S_nf), \quad f\in\hh, n\in\zbb_+.
\end{align*}
If $S\in\B[\ell^2(\hh)]$ is a diagonal operator associated with the sequence $\{S_n\}_{n=0}^\infty$, then we say that the elements from the sequence $\{S_n\}_{n=0}^\infty$ are the \textit{elements located on the diagonal} of $S$.

Observe that, since $m$-isometric operators are bounded from below, it is easily verified that the sequence of weights of an $m$-isometric unilateral operator valued weighted shift is uniformly bounded from below (to prove this statement use the inequality from the definition for boundedness from below to $e_n(f)$ for every $n\in\zbb_+$ and $f\in\hh$).
Furthermore, note that, in light of \cite[Proposition 2.5(i)]{jablonski-2004}, $S$ is an $m$-isometry for $m\in\nbb$ if and only if 
\begin{align}
	\label{eq:m-iso-shift}
	0 = \|f\|^2 + \sum_{p=1}^m (-1)^p\binom{m}{p} \|S_{[p,s]}f\|^2, \quad s\in\zbb_+, f\in\hh,
\end{align} 
where $S_{[p,s]} = S_{p-1+s}\dots S_s$ for $p\in\nbb$.

Assume that $m\in\zbb_+$ and $E\colon\mathcal{B}([0,1])\to\B$ is a spectral measure.
A mapping $W\colon\zbb_+\times[0,1]\to(0,\infty)$ is called an \textit{$E$-measurable family of polynomials of degree at most $m$} if the following assertions hold
\begin{enumerate}
	\item $W(0,x)=1$ a.e.\ $[E]$,
	\item $W(n,x)$ is a polynomial in $n$ of degree at most $m$ for $E$-almost every $x\in[0,1]$,
	\item the functions $W(n,x)$ and $\sqrt{\frac{W(n+1, x)}{W(n,x)}}$ are $\mathcal{B}([0,1])$-measurable for every $n\in\zbb_+$.
\end{enumerate}

Observe that in the definition stated above the assumption that $\sqrt{\frac{W(n+1, x)}{W(n,x)}}$ are $\mathcal{B}([0,1])$-measurable for every $n\in\zbb_+$ is superfluous. 
This particular assumption follows directly from the others and basic properties of measurable functions.
Hence, we do not need to verify it while checking properties of an $E$-measurable family of polynomials.

The following theorem is the main result of this section and yields a characterization of $m$-isometric unilateral operator valued weighted shifts with positive and commuting weights for $m\in\nbb$.

\begin{theorem}
	\label{thm:unilateral-m-iso}
	Suppose $m\in\nbb$ and $\{S_n\}_{n=0}^\infty\subseteq\B$ is a uniformly bounded sequence of invertible, positive and commuting operators. 
	Then the following are equivalent{\em :}
	\begin{enumerate}
		\item $\ushift{S}$ in an $m$-isometry,
		\item there exist a spectral measure $E\colon\mathcal{B}([0,1])\to\B$ and an $E$-measurable family $W$ of polynomials of degree at most $m-1$ such that 
		\begin{align*}
			S_n = \int_{[0,1]} \sqrt{\frac{W(n+1, x)}{W(n,x)}} E(dx), \quad n\in\zbb_+.
		\end{align*}
	\end{enumerate}
\end{theorem}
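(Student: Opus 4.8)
The plan is to simultaneously diagonalize the weights and to reduce the $m$-isometry condition \eqref{eq:m-iso-shift} to a pointwise statement about finite differences. First I would invoke Proposition \ref{pro:spectral-measure} for the positive commuting family $\{S_n\}_{n=0}^\infty$, obtaining a single spectral measure $E\colon\mathcal{B}([0,1])\to\B$ and bounded measurable $\xi_n\colon[0,1]\to\rbb_+$ with $S_n=\int_{[0,1]}\xi_n\,dE$. Invertibility and positivity of each $S_n$ force $\xi_n$ to be bounded away from $0$ a.e.\ $[E]$, and since the $S_n$ commute and share the measure $E$, each product $S_{[p,s]}=S_{p-1+s}\cdots S_s$ is positive with
\begin{align*}
	S_{[p,s]}^2=\int_{[0,1]}\big(\xi_{p-1+s}\cdots\xi_s\big)^2\,dE,
	\qquad
	\|S_{[p,s]}f\|^2=\langle S_{[p,s]}^2 f,f\rangle.
\end{align*}

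Next I would set $W(0,x):=1$ and $W(n,x):=\prod_{j=0}^{n-1}\xi_j(x)^2$ for $n\ge1$, so that $\xi_n(x)=\sqrt{W(n+1,x)/W(n,x)}$ and the products telescope to $(\xi_{p-1+s}(x)\cdots\xi_s(x))^2=W(p+s,x)/W(s,x)$. Substituting into \eqref{eq:m-iso-shift} and writing $\|f\|^2=\langle f,f\rangle$, the $m$-isometry condition says that a fixed self-adjoint spectral integral annihilates every $f\in\hh$; as such an operator vanishes precisely when its quadratic form does, Lemma \ref{lem:spectral-measure-zero-operator}(iii) turns this into the scalar identity
\begin{align*}
	1+\sum_{p=1}^m(-1)^p\binom{m}{p}\frac{W(p+s,x)}{W(s,x)}=0
	\quad\text{a.e.\ }[E],\ s\in\zbb_+.
\end{align*}
Multiplying by $W(s,x)>0$ this reads $\sum_{p=0}^m(-1)^p\binom{m}{p}W(p+s,x)=0$, i.e.\ the $m$-th forward difference of $n\mapsto W(n,x)$ vanishes at every $s$.

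The decisive step is the classical fact that a real sequence has identically vanishing $m$-th forward difference if and only if it is the restriction of a polynomial of degree at most $m-1$ (through Newton's forward-difference expansion). Discarding the countable union over $s\in\zbb_+$ of the $E$-null exceptional sets, which is again $E$-null, I obtain a single null set off which $W(\cdot,x)$ is such a polynomial; together with $W(0,x)=1$ and the measurability of the finite products this exhibits $W$ as an $E$-measurable family of polynomials of degree at most $m-1$, proving (i)$\Rightarrow$(ii). The converse runs the same computation in reverse: the degree bound yields the vanishing finite difference pointwise, integration against $E$ restores the operator identity, and unwinding the telescoping recovers \eqref{eq:m-iso-shift}.

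I expect the principal difficulty to be bookkeeping rather than conceptual. The genuine technical care lies in arranging that $W$ takes values in $(0,\infty)$ \emph{everywhere} (not merely a.e.\ $[E]$), which forces a modification of the $\xi_n$ on an $E$-null set, and in passing from the family of a.e.\ identities indexed by $s$ to one holding simultaneously. The one substantive observation is that, after diagonalization and telescoping, the operator-theoretic $m$-isometry condition collapses exactly onto the finite-difference characterization of polynomials of degree at most $m-1$.
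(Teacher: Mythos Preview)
Your proposal is correct and follows essentially the same route as the paper: diagonalize via Proposition~\ref{pro:spectral-measure}, telescope the products through $W(n,x)=\prod_{j<n}\xi_j(x)^2$, use Lemma~\ref{lem:spectral-measure-zero-operator}(iii) to reduce \eqref{eq:m-iso-shift} to a pointwise vanishing $m$-th forward difference, and invoke the finite-difference characterization of polynomials (the paper cites \cite[Proposition~2.1]{J-B-S-2020} for this, which is your ``classical fact via Newton's expansion''). Your explicit handling of the countable union of $E$-null sets over $s\in\zbb_+$ and the remark about modifying $\xi_n$ on a null set so that $W$ is everywhere positive are points the paper leaves implicit.
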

\begin{proof}
	First, observe that Proposition \ref{pro:spectral-measure} implies that there exist a spectral measure $E\colon\mathcal{B}([0,1])\to\B$ and a sequence of $\mathcal{B}([0,1])$-measurable functions $\{\xi_n\}_{n=0}^\infty$ such that 
	\begin{align*}
		S_n = \int_{[0,1]} \xi_n \mathrm{d}E, \quad n\in\zbb_+.
	\end{align*}
	Moreover, since operators $\{S_n\}_{n=0}^\infty$ are invertible, by \cite[Proposition~4.19]{schmudgen-2012} we can choose functions $\{\xi_n\}_{n=0}^\infty$ so that their images are entirely contained within $(0,\infty)$.
	Thus, in view of \eqref{eq:m-iso-shift} and \cite[Proposition~4.1]{schmudgen-2012} we see that $\ushift{S}$ is an $m$-isometry if and only if 
	\begin{align*}
		0 &= \|f\|^2 + \sum_{p=1}^m (-1)^p\binom{m}{p} \|S_{[p,s]}f\|^2 \\
		&= \|f\|^2 + \sum_{p=1}^m (-1)^p\binom{m}{p} \int_{[0,1]} |\xi_{p-1+s}^2(x)\dots\xi_s^2(x)| \langle E(dx) f, f\rangle \\
		&= \|f\|^2 + \int_{[0,1]} \sum_{p=1}^m (-1)^p\binom{m}{p}  \xi_{p-1+s}^2(x)\dots\xi_s^2(x) \langle E(dx) f, f\rangle, \quad s\in\zbb_+, f\in\hh \\
	\end{align*}
	which, by Lemma \ref{lem:spectral-measure-zero-operator}(iii), is equivalent to 
	\begin{align}
		\label{eq:spectral-m-iso}
		0 = 1+ \sum_{p=1}^m  (-1)^p\binom{m}{p}  \xi_{p-1+s}^2\dots\xi_s^2, \quad s\in\zbb_+, \text{ a.e.\ } [E].
	\end{align} 
	
	(i)$\Rightarrow$(ii).
	Since $\ushift{S}$ is an $m$-isometry, \eqref{eq:spectral-m-iso} holds.
	We define an $E$-measurable family $W$ in the following way 
	\begin{align*}
		W(n,x) = \begin{cases}
			1 & \text{if } n=0,\\
			\xi_{n-1}^2(x)\dots\xi_0^2(x) & \text{if } n\in\nbb,
		\end{cases} \quad x\in[0,1].
	\end{align*}
	Observe that \eqref{eq:spectral-m-iso} is equivalent to 
	\begin{align*}
		0 = \sum_{p=0}^m  (-1)^p\binom{m}{p} \frac{W(p+s, x)}{W(s,x)}, \quad s\in\zbb_+, \text{ a.e.\ } [E].
	\end{align*}
	and, by \cite[Proposition~2.1]{J-B-S-2020}, the latter implies that $W(n,x)$ is a polynomial in $n$ of degree at most $m-1$ for $E$-almost every $x\in[0,1]$.
	
	(ii)$\Rightarrow$(i).
	For the proof of this part it is enough to note that \eqref{eq:spectral-m-iso} holds using again \cite[Proposition~2.1]{J-B-S-2020}.
	Hence $\ushift{S}$ is an $m$-isometry.
\end{proof}

\begin{remark} \noindent 
	\begin{enumerate}
		\item Observe that in Proposition \ref{pro:spectral-measure} instead of $[0,1]$ one can choose an arbitrary compact interval in $\rbb_+$ and modify the statement of Theorem \ref{thm:unilateral-m-iso} accordingly.
		\item It is easy to see that Theorem \ref{thm:unilateral-m-iso} generalizes \cite[Theorem 2.1]{A-L-2016} which characterizes $m$-isometric classical unilateral weighted shifts.
	\end{enumerate}
\end{remark}

Let us recall some notation from \cite{J-B-S-2020}. 
Define an operator $\triangle\colon\rbb^{\zbb_+}\to\rbb^{\zbb_+}$ so that $\triangle\gamma=\gamma'$ for $\gamma=\{\gamma_n\}_{n=0}^\infty\in\rbb^{\zbb_+}$ where $\gamma'_n = \gamma_{n+1} - \gamma_n$ for $n\in\zbb_+$.
It is easily seen that $\triangle$ is linear on $\rbb^{\zbb_+}$.
For $n\in\nbb$ and $k\in\zbb_+$ set 
\begin{align*}
	(n)_k = \begin{cases}
		1 & \text{ if } k=0,\\
		\prod_{i=0}^{k-1} (n-i) & \text{ otherwise.}
	\end{cases}
\end{align*}
Note that $(n)_k$ is a polynomial in $n$ of degree $k$ for any $k\in\zbb_+$.

Suppose $\gamma = (\gamma_0, \gamma_1, \dots)$ is a sequence of functions such that $\gamma_n\colon[0,1]\to\rbb_+$ for every $n\in\zbb_+$. 
We define
\begin{align*}
	\gamma(x) = (\gamma_0(x), \gamma_1(x), \dots), \quad x\in[0,1].
\end{align*}

Let $\mathcal{C}$ be a class of operators.
Following the definition from \cite[Section 3]{J-K-2019} we say that the problem of determining whether or not there exists an extension of a given initial finite sequence of positive values to a sequence of weights for some unilateral weighted shift so that the shift is in class $\mathcal{C}$ is called the \textit{completion problem} for operators in class $\mathcal{C}$. 
We generalize this problem to unilateral weighted shifts with operator weights.

The following result is a consequence of Theorem \ref{thm:unilateral-m-iso} and provides a solution to the completion problem for $m$-isometric unilateral operator valued weighted shifts with $m-1$ initial weights which are positive and commuting (cf.\ \cite[Theorem~3.3]{jablonski-2004}).

\begin{corollary}
	\label{cor:completion-problem}
	Suppose $\{S_n\}_{n=0}^{m-2}\subseteq\B$ are positive, invertible, commuting operators for some $m\ge2$ and $E\colon\mathcal{B}([0,1])\to\B$ is a spectral measure such that $S_n=\int_{[0,1]}\xi_n\mathrm{d}E$ for $n\in\{0,\dots,m-2\}$.
	Assume that there exist $C$, $c\in(0,\infty)$ such that 
	\begin{align}
		\label{eq:shifts-completion-boundedness}
		C> \frac{\sum_{k=0}^{m-1}\frac{(\triangle^k \gamma)_0}{k!} (n+1)_k}{\sum_{k=0}^{m-1}\frac{(\triangle^k \gamma)_0}{k!} (n)_k} \ge c, \quad  n\ge m, \text{ a.e.\ } [E], 
	\end{align}
	and 
	\begin{align}
		\label{eq:shifts-completion-boundedness-2}
		C> \frac{\sum_{k=0}^{m-1}\frac{(\triangle^k \gamma)_0}{k!} (m)_k}{\gamma_{m-1}} \ge c, \quad \text{ a.e.\ } [E], 
	\end{align}
	where 
	$\gamma=(1,\xi_0^2,\dots,\prod_{k=0}^{m-2}\xi_k^2,0,\dots)$ is a sequence of functions.
	Then there exists a sequence $\{S_n\}_{n=m-1}^\infty\subseteq\B$ of operators such that $\ushift{S}$ is an $m$-isometric unilateral operator valued weighted shift with positive and commuting weights.
\end{corollary}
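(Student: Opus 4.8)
The plan is to produce the extension by feeding an explicitly interpolated family of polynomials into Theorem \ref{thm:unilateral-m-iso}. Guided by Newton's forward-difference formula, I would define
\[
W(n,x) = \sum_{k=0}^{m-1} \frac{(\triangle^k\gamma)_0(x)}{k!}\,(n)_k, \qquad n\in\zbb_+,\ x\in[0,1],
\]
and then set the missing weights by
\[
S_n = \int_{[0,1]} \sqrt{\frac{W(n+1,x)}{W(n,x)}}\, E(dx), \qquad n\ge m-1.
\]
Since each prescribed $S_n$ ($0\le n\le m-2$) is positive and invertible, I may assume its representing function $\xi_n$ takes values in $(0,\infty)$ a.e.\ $[E]$, so every entry of $\gamma$ of index at most $m-1$ is strictly positive a.e.\ $[E]$; note that only these entries (not the trailing zeros) enter the differences $(\triangle^k\gamma)_0$ for $k\le m-1$.

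The first verification is that $W$ is a genuine $E$-measurable family of polynomials of degree at most $m-1$. Each $(n)_k$ is a polynomial in $n$ of degree $k\le m-1$ and each coefficient $(\triangle^k\gamma)_0(x)/k!$ is a finite linear combination of the measurable functions $\gamma_0(x),\dots,\gamma_{m-1}(x)$, so $n\mapsto W(n,x)$ is a polynomial of degree at most $m-1$ and each $W(n,\cdot)$ is $\mathcal{B}([0,1])$-measurable. Using $(0)_0=1$ and $(0)_k=0$ for $k\ge1$ gives $W(0,x)=\gamma_0(x)=1$ a.e.\ $[E]$, while the forward-difference formula yields the interpolation identity $W(j,x)=\gamma_j(x)$ for $j\in\{0,\dots,m-1\}$; in particular $W(n,x)=\prod_{k=0}^{n-1}\xi_k^2(x)$ for $n\le m-1$, so the weights just defined reproduce exactly the given $S_0,\dots,S_{m-2}$ and the construction is an honest completion. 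Measurability of the square-root ratios is automatic, as observed after the definition of an $E$-measurable family.

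Next I would match the hypotheses to $W$. The quotients displayed in \eqref{eq:shifts-completion-boundedness} and \eqref{eq:shifts-completion-boundedness-2} are precisely $W(n+1,x)/W(n,x)$ for $n\ge m$ and for $n=m-1$, so together they assert $c\le W(n+1,x)/W(n,x)<C$ a.e.\ $[E]$ for every $n\ge m-1$. Because $W(m-1,x)=\prod_{k=0}^{m-2}\xi_k^2(x)>0$ a.e.\ $[E]$, an induction on $n$ using the lower bound $c>0$ propagates strict positivity of $W(n,\cdot)$ through the whole extrapolation range, giving $W(n,x)>0$ for all $n\in\zbb_+$ a.e.\ $[E]$. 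Hence every square root is well defined, $W$ takes values in $(0,\infty)$, and Theorem \ref{thm:unilateral-m-iso} delivers that $\ushift{S}$ is an $m$-isometry, provided the weights form a uniformly bounded sequence of positive, invertible, commuting operators.

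Finally I would confirm those four standing properties of $\{S_n\}_{n=0}^\infty$. For $n\ge m-1$ the integrand is positive, so $S_n$ is positive by Lemma \ref{lem:spectral-measure-zero-operator}(ii); the bound $\sqrt{W(n+1,x)/W(n,x)}\ge\sqrt c$ a.e.\ $[E]$ gives $S_n\ge\sqrt c\,I$, hence invertibility, while $\sqrt{W(n+1,x)/W(n,x)}<\sqrt C$ gives $\|S_n\|\le\sqrt C$ by Lemma \ref{lem:spectral-measure-zero-operator}(i); combined with the hypotheses on $S_0,\dots,S_{m-2}$ this yields uniform boundedness and invertibility of the entire sequence. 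Commutativity is immediate, since every weight—including the prescribed ones $S_n=\int_{[0,1]}\xi_n\,E(dx)$—is a spectral integral against the single spectral measure $E$. I expect the main obstacle to be bookkeeping rather than conceptual: one must check that the Newton interpolant reproduces exactly the given weights $S_0,\dots,S_{m-2}$, and one must extract strict positivity of $W(n,\cdot)$ on the range $n\ge m$, where it is no longer supplied by the data but must be recovered from the lower bound $c$ in \eqref{eq:shifts-completion-boundedness}--\eqref{eq:shifts-completion-boundedness-2}.
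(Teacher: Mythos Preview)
Your proposal is correct and follows essentially the same approach as the paper: construct $W$ via the Newton forward-difference interpolant of $\gamma$, verify it is an $E$-measurable family of polynomials of degree at most $m-1$ that reproduces the given weights, use the bounds \eqref{eq:shifts-completion-boundedness}--\eqref{eq:shifts-completion-boundedness-2} together with Lemma~\ref{lem:spectral-measure-zero-operator} to obtain uniform boundedness and invertibility, and then apply Theorem~\ref{thm:unilateral-m-iso}. The only cosmetic difference is that the paper writes $W$ piecewise (as $\gamma_n$ for $n\le m-1$ and the Newton sum for $n\ge m$) and cites \cite{J-B-S-2020} for the interpolation identity, whereas you write the Newton sum for all $n$ and verify the interpolation directly; your explicit induction securing $W(n,\cdot)>0$ for $n\ge m$ and your remark on commutativity via the common spectral measure make points the paper leaves implicit.
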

\begin{proof}
	Observe that $\gamma$ is a sequence such that $\gamma_n\colon[0,1]\to(0,\infty)$ is a measurable function for all $n\in\{0,\dots,m-1\}$.
	Let us define $W\colon\zbb_+\times[0,1]\to(0,\infty)$ such that 
	\begin{align}
		\label{eq:family-completion-problem}
		W(n,x) = \begin{cases}
			\gamma_n(x) & \text{ for } n\in\{0,\dots,m-1\}, \\
			\sum_{k=0}^{m-1}\frac{(\triangle^k \gamma(x))_0}{k!} (n)_k & \text{ for } n\ge m,
		\end{cases} \quad x\in[0,1].
	\end{align}
	Now observe that $W$ is an $E$-measurable family of polynomials of degree at most $m-1$ due to \cite[formula (2.2)]{J-B-S-2020} and \cite[Proposition~2.1]{J-B-S-2020}.
	Using the definition of $\gamma$ and \eqref{eq:family-completion-problem} we can easily verify that
	\begin{align*}
		S_n &= \int_{[0,1]} \xi_n \mathrm{d}E = \int_{[0,1]} \sqrt{\frac{\gamma_{n+1}(x)}{\gamma_n(x)}} E(dx) \\ 
		&\overset{\eqref{eq:family-completion-problem}}= \int_{[0,1]} \sqrt{\frac{W(n+1, x)}{W(n,x)}} E(dx), \quad n\in\{0,\dots,m-2\}.
	\end{align*}
	Define $S_n=\int_{[0,1]} \sqrt{\frac{W(n+1, x)}{W(n,x)}} E(dx)$ for $n\ge m-1$.
	In view of \eqref{eq:shifts-completion-boundedness}, \eqref{eq:shifts-completion-boundedness-2}, Lemma~\ref{lem:spectral-measure-zero-operator} and \cite[Proposition~4.19]{schmudgen-2012}, sequence $\{S_n\}_{n=0}^\infty$ is a uniformly bounded sequence of positive, invertible operators. 
	Theorem \ref{thm:unilateral-m-iso} completes the proof.
\end{proof}

The next example shows a useful application Corollary \ref{cor:completion-problem} and establishes a very simple sufficient condition for existence of a solution to the completion problem in class of $3$-isometric unilateral operator valued weighted shifts with positive and commuting weights for two initial weights.

\begin{example}
	Suppose that $S_0$, $S_1\in\B$ are positive, invertible and commuting operators.
	It follows from Proposition \ref{pro:spectral-measure} that there exist a spectral measure $E\colon\mathcal{B}([0,1])\to\B$ and $\mathcal{B}([0,1])$-measurable functions $\xi_0$, $\xi_1\colon[0,1]\to(0,\infty)$ such that $S_n= \int_{[0,1]} \xi_n \mathrm{d}E$ for $n\in\{0,1\}$.
	Suppose that there exist $C$, $c\in(0,\infty)$ such that
	\begin{align*}
		C > 1 + 
		\frac{\xi_0^2 -1  + n(\xi_0^2\xi_1^2 - 2\xi_0^2 + 1)}
			{1 + n(\xi_0^2 -1) + \frac{n(n-1)}{2}(\xi_0^2\xi_1^2 - 2\xi_0^2 + 1)} 
		\ge c, \quad n\ge m, \text{ a.e.\ } [E]
	\end{align*}
	and
	\begin{align*}
		C >  
		\frac{1 + 3\xi_0^2\xi_1^2 - 3\xi_0^2}{\xi_0^2\xi_1^2}
		\ge c, \quad \text{ a.e.\ } [E]
	\end{align*}
	
	This implies that \eqref{eq:shifts-completion-boundedness} and \eqref{eq:shifts-completion-boundedness-2} hold with $\gamma$ given as in Corollary \ref{cor:completion-problem} .
	Thus, it follows from Corollary \ref{cor:completion-problem} that there exists a sequence of positive, invertible and commuting operators $\{S_n\}_{n=2}^\infty$ such that $\ushift{S}$ is a $3$-isometry and $\{S_n\}_{n=0}^\infty$ is a commuting family.
\end{example}

Finally, let us observe that the solution to the $3$-isometric completion problem established in the above example is actually unique.
This is a direct consequence of an analogical uniqueness argument to the one used in \cite[Theorem~3.3]{jablonski-2004} for $2$-isometries.
We generalize this remark in the result below.

\begin{proposition}
	If $m\ge2$ and $\{S_n\}_{n=0}^{m-2}\subseteq\B$ consists of positive, invertible and commuting operators, then, if exists, the solution to the completion problem for $m$-isometric unilateral operator valued weighted shifts for $\{S_n\}_{n=0}^{m-2}$ is unique in the class of unilateral shifts with positive and commuting weights.
\end{proposition}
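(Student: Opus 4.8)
The plan is to work directly with the squared weights rather than through the spectral representation of Theorem~\ref{thm:unilateral-m-iso}, because two distinct completions need not commute with each other and hence need not admit a common spectral measure. Suppose $\{S_n\}_{n=0}^\infty$ and $\{S_n'\}_{n=0}^\infty$ are two sequences of positive, invertible and commuting operators, each giving rise to an $m$-isometric unilateral operator valued weighted shift, and such that $S_n=S_n'$ for $n\in\{0,\dots,m-2\}$. For the first sequence set $W_0=I$ and $W_n=S_0^2\cdots S_{n-1}^2$ for $n\in\nbb$; since the weights are positive and mutually commuting, each $S_{[p,s]}=S_{p-1+s}\cdots S_s$ is positive and $S_{[p,s]}^2=W_{s+p}W_s^{-1}$. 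First I would observe that, because \eqref{eq:m-iso-shift} holds for every $f\in\hh$ and all the operators appearing in it are self-adjoint, it is equivalent to the operator identity $I+\sum_{p=1}^m(-1)^p\binom{m}{p}S_{[p,s]}^2=0$ for every $s\in\zbb_+$. Multiplying this identity by the positive invertible operator $W_s$, which commutes with each $S_{[p,s]}^2$, I obtain
\begin{align*}
	\sum_{p=0}^m(-1)^p\binom{m}{p}W_{s+p}=0,\quad s\in\zbb_+.
\end{align*}

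The next step is the elementary remark that such an $m$-th order homogeneous difference relation determines the whole sequence from its first $m$ entries. Solving the displayed identity for the highest index gives $W_{s+m}=(-1)^{m+1}\sum_{p=0}^{m-1}(-1)^p\binom{m}{p}W_{s+p}$, so each $W_n$ with $n\ge m$ is a fixed linear combination of $W_{n-m},\dots,W_{n-1}$; by induction on $n$ every $W_n$ is uniquely determined by $W_0,\dots,W_{m-1}$. The point is that these $m$ initial operators depend only on the prescribed data: $W_0=I$ and $W_n=S_0^2\cdots S_{n-1}^2$ for $n\in\{1,\dots,m-1\}$ involve only $S_0,\dots,S_{m-2}$.

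Finally, I would run the same construction for the second completion, producing a sequence $\{W_n'\}_{n=0}^\infty$ that satisfies the identical recurrence and agrees with $\{W_n\}$ for $n\in\{0,\dots,m-1\}$, since $S_n=S_n'$ there. Hence $W_n=W_n'$ for all $n\in\zbb_+$, and as each $W_n$ is invertible we get $S_n^2=W_{n+1}W_n^{-1}=W_{n+1}'(W_n')^{-1}=(S_n')^2$ for every $n$; positivity of $S_n$ and $S_n'$ together with uniqueness of the positive square root then forces $S_n=S_n'$. I expect the only genuinely delicate point to be the reduction to the single commuting family $\{W_n\}$ attached to one completion: this is precisely what lets me sidestep the unavailable assumption that the two competing completions commute jointly, which would otherwise be required in order to apply Theorem~\ref{thm:unilateral-m-iso} to both simultaneously.
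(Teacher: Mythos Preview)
Your proof is correct and follows essentially the same route as the paper: both arguments use \eqref{eq:m-iso-shift} as an $m$-th order recurrence that pins down each new weight from the preceding $m-1$, then invoke invertibility, commutativity, and uniqueness of positive square roots to conclude. The paper carries out the induction directly on the norms $\|S_{p-1+s}\cdots S_s f\|^2$, whereas you repackage the same computation via the auxiliary operators $W_n=S_0^2\cdots S_{n-1}^2$ and the linear recurrence $\sum_{p=0}^m(-1)^p\binom{m}{p}W_{s+p}=0$; the content is the same, and your closing remark that the two completions need not commute with each other (so a joint spectral argument via Theorem~\ref{thm:unilateral-m-iso} is unavailable) is a worthwhile observation that the paper leaves implicit.
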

\begin{proof}
	Suppose $\{S_n\}_{n=m-1}^\infty$ and $\{S_n'\}_{n=m-1}^\infty$ extend $\{S_n\}_{n=0}^{m-2}$ to $m$-isometric unilateral operator valued weighted shifts with positive and commuting weights.
	It follows directly from \eqref{eq:m-iso-shift} that
	\begin{align}
		\label{eq:m-iso-uniqueness}
		\notag \| S_{m-1}\dots S_0f\|^2 &= (-1)^m \Big( -\|f\|^2 - \sum_{p=1}^{m-1} (-1)^p \binom{m}{p} \|S_{p-1}\dots S_0f\|^2\Big) \\
		&= \| S_{m-1}'S_{m-2}\dots S_0f\|^2, \quad f\in\hh.
	\end{align}
	Combined with the fact that the weights are invertible, positive and commuting this implies that $S_{m-1}=S_{m-1}'$.
	Using \eqref{eq:m-iso-uniqueness} with an analogical induction argument proves that $S_n=S_n'$ for every $n\ge m-1$. 
	The verification of the last statement is left to the reader. 
	This concludes the proof.
\end{proof}

\section{The Wold-type decomposition for $m$-isometries}

Following \cite[Definition 1.1]{shimorin-2001} we say that operator $T\in\B$ admits the \textit{Wold-type decomposition}, if the following two hold
\begin{enumerate}
	\item $\RngInf(T)$ reduces $T$ to a unitary operator,
	\item $\bigvee\{T^n(\Ker(T^*))\}_{n=0}^\infty\oplus \RngInf(T) = \hh$.
\end{enumerate}
Observe that in the class of analytic operators condition  (i) from the above definition is trivially satisfied.
Moreover, we say that $T\in\B$ has the \textit{wandering subspace property}, if $\bigvee\{T^n(\Ker(T^*))\}_{n=0}^\infty=\hh$.
The last definition was introduced in \cite[Definition~2.4]{shimorin-2001}.

Before presenting the next example let us recall the definition of composition operators on directed graphs with one circuit from \cite{J-K-2019}.
Let $\kappa\in\nbb$, $\eta\in\nbb\cup\{\infty\}$.
Suppose
\begin{align*}
	X = \{x_1, \ldots, x_\kappa\} \cup \bigcup_{i=1}^\eta \Big\{x_{i,j}\colon j \in\nbb \Big\},
\end{align*}
where $\{x_i\}_{i=1}^{\kappa}$ and $\{x_{i,j}\}_{i=1}^{\eta}{_{j=1}^\infty}$ are disjoint sets of distinct elements of $X$.
Assume that $(X,\ascr,\mu)$ is a discrete measure space.
Let $\phi$ be a self-map of $X$ such that 
\begin{align}
	\label{eq:phi-definition}
	\phi(x) = \begin{cases}
		x_{i,j-1} & \text{ if } x = x_{i,j} \text{ for some } 1\le i\le\eta \text{ and } j\in\nbb\setminus \{1\},\\
		x_\kappa & \text{ if } x = x_{i,1} \text{ for some } 1\le i\le\eta \text{ or } x=x_1, \\
		x_{i-1} & \text{ if } x = x_i \text{ for some } i\in \{j\in\nbb\colon 2\le j \le  \kappa\}.
	\end{cases}
\end{align}
An operator $\cf$ defined on $L^2(\mu)$ with $\phi$ defined above is called a \textit{composition operator on a directed graph with one circuit}.
For details and known results regarding this class of operators the reader is referred to \cite{J-K-2019}.

The following example which shows that \cite[Theorem 2.5]{A-C-J-S-2017} does not hold for analytic $3$-isometries (cf. \cite[Example 3.1]{A-C-T-2018}).
	
\begin{example}
	Suppose $\cf\in\B[L^2(\mu)]$ is a composition operator on a directed graph with one circuit with $\kappa=\eta=1$.  
	It follows from \cite[Corollary~2.12]{J-K-2019} that $\cf$ is a strict $3$-isometry if and only if there exists a monomial $w(x) = ax+b$ for some $a>0$ and $b\in\rbb_+$ such that $\mu(x_{1,i})=w(i)$ for $i\in\nbb$.
    Assume that $\cf$ is a strict $3$-isometry such that $b>0$ and $\cf$ satisfies the kernel condition.
    It follows from Theorem \ref{thm:composition-kernel-condition} and \eqref{eq:phi-definition} that $\h$ is constant on $x_1$ and $x_{1,1}$, which combined with \cite[(2.10)]{J-K-2019} implies that 
    \begin{align}
    	\label{eq:measure-kernel-condition}
    	\mu(x_1)=\frac{(a+b)^2}{a} . 
    \end{align}
	Note that it is a direct consequence of the above that 
	\begin{align*}
		\Ker(\cf^*) &= \{f\in L^2(\mu) : f(x_{1,1}) = -f(x_1)\Big(1+\frac{b}{a}\Big) \text{ and } f(x_{1,n}) = 0 \text{ for } n\ge 2\}.
	\end{align*}
	For $k\in\zbb_+$ set $M_k=\cf^k(\Ker(\cf^*))$ and observe that
	\begin{align}
		\label{eq:example-spaces}
		M_k = &\{f\in L^2(\mu) : f(x_1)=\dots=f(x_{1,k}), f(x_{1,k+1}) = -f(x_1)\Big(1+\frac{b}{a}\Big) \notag \\ 
		& \text{ and } f(x_{1,n}) = 0 \text{ for } n > k+1\}.
	\end{align}
	It can be now verified that $M_2$ is not orthogonal to $M_3$.
	Indeed, let $f\in M_2$, $g\in M_3$ be such that $f(x_1)=g(x_1)=1$.
	Then
	\begin{align*}
		\langle f,g \rangle 
		&\overset{\eqref{eq:example-spaces}}= f(x_1)g(x_1)\mu(x_1) + \sum_{n=1}^\infty f(x_{1,n})g(x_{1,n})\mu(x_{1,n}) \\
		&= \mu(x_1) + \mu(x_{1,1}) + \mu(x_{1,2}) - \mu(x_{1,3})\Big(1+\frac{b}{a}\Big) \\
		&\overset{\eqref{eq:measure-kernel-condition}}=  \frac{(a+b)^2}{a} + (a+b) + (2a+b) - \Big(1+\frac{b}{a}\Big)(3a+b)
		= a \ne 0.
	\end{align*}
	Hence condition (iii) from \cite[Theorem 2.5]{A-C-J-S-2017} does not hold.
	Furthermore, it follows from \cite[Corollary 4.3]{J-K-2019} that $\cf$ is analytic.
\end{example}

Let us also note that in the above example $\cf$ is expansive (i.e. $\cf^*\cf\ge I$).
This can be inferred from \cite[Lemma 2.3]{jablonski-2003} and \cite[(2.10)]{J-K-2019}.

The following theorem establishes an equivalent condition for an analytic $m$-isometry to admit the Wold-type decomposition with additional property regarding orthogonality of images of the kernel of the adjoint of the operator under its powers for $m\ge2$.

\begin{theorem}
	\label{thm:analytic-m-iso}
	Suppose $T\in\B$ is an analytic $m$-isometry for some $m\ge2$.
	Then the following are equivalent{\em:}
	\begin{enumerate}
		\item $T$ satisfies the $(m-1)$-kernel condition,
		\item spaces $\{T^n(\Ker(T^*))\}_{n=0}^\infty$ are orthogonal to each other and $T$ admits the Wold-type decomposition.
	\end{enumerate}
\end{theorem}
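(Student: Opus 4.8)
The plan is to dispatch the two implications separately, exploiting the fact that $T$ is analytic, so $\RngInf(T)=\{0\}$. Consequently condition (i) in the definition of the Wold-type decomposition is automatic, and ``$T$ admits the Wold-type decomposition'' is equivalent to the wandering subspace property $\bigvee\{T^n(\Ker(T^*))\}_{n=0}^\infty=\hh$. Writing $W:=\Ker(T^*)$, the engine of both implications will be Proposition \ref{pro:analytic-inclusions}, which under hypothesis (i) yields $T^n(W)\subseteq\Ker(T^{*(n+1)})$, that is, $T^{*(n+1)}T^n f=0$ for all $f\in W$ and all $n\in\zbb_+$.

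For (i)$\Rightarrow$(ii), I would first read off orthogonality almost for free. For $k<l$ and $f,g\in W$, I would write $T^l g=T^{k+1}(T^{l-k-1}g)$ and compute $\langle T^k f,T^l g\rangle=\langle T^{*(k+1)}T^k f,\,T^{l-k-1}g\rangle=0$ by Proposition \ref{pro:analytic-inclusions}; the identical computation shows $T^k(W)\perp T^n(\hh)$ whenever $k<n$. Next I would run the classical Wold iteration: since $T$ is left-invertible its range is closed, so $\hh=W\oplus T(\hh)$, and iterating gives $\hh=\sum_{k=0}^{n-1}T^k(W)+T^n(\hh)$ for every $n$. Because $T^k$ is bounded below, each $T^k(W)$ is closed, $T^n(\hh)$ is closed, and by the orthogonality just established all these summands are mutually orthogonal; hence $\hh=\bigoplus_{k=0}^{n-1}T^k(W)\oplus T^n(\hh)$ genuinely. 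Finally, any $x$ orthogonal to $M:=\bigvee\{T^k(W)\}_{k=0}^\infty$ must, by this decomposition, lie in $T^n(\hh)$ for every $n$, hence in $\bigcap_n T^n(\hh)=\RngInf(T)=\{0\}$ by analyticity; thus $M=\hh$, which is precisely the wandering subspace property, and (ii) follows.

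For (ii)$\Rightarrow$(i), orthogonality together with the wandering subspace property provide the orthogonal decomposition $\hh=\bigoplus_{n=0}^\infty\overline{T^n(W)}$. Setting $\hh_n:=\overline{T^n(W)}$, continuity of $T$ gives $T(\hh_n)\subseteq\hh_{n+1}$, and a one-line inner-product argument (using orthogonality of the $\hh_n$ and that they span $\hh$) shows the adjoint satisfies $T^*(\hh_n)\subseteq\hh_{n-1}$, with $\hh_{-1}:=\{0\}$ and $T^*\hh_0=T^*(W)=\{0\}$. Therefore, for $f\in W=\hh_0$ we get $T^k f\in\hh_k$ and hence $T\pp{k}f=T^{*k}T^k f\in\hh_0=W=\Ker(T^*)$; in particular $\Ker(T^*)$ is invariant for $T\pp{n}$ for $n\in\{1,\dots,m-1\}$, which is the $(m-1)$-kernel condition.

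The conceptual heart of the argument is funneled entirely through Proposition \ref{pro:analytic-inclusions}: once $T^n(W)\subseteq\Ker(T^{*(n+1)})$ is in hand, orthogonality of the wandering summands becomes a triviality, and it is exactly this orthogonality that converts the otherwise delicate Wold iteration into a clean orthogonal decomposition. I expect the step requiring the most care to be the passage from $\hh=\sum_{k=0}^{n-1}T^k(W)+T^n(\hh)$ to the orthogonal direct sum, together with the final appeal to $\bigcap_n T^n(\hh)=\{0\}$: here one must verify that the relevant ranges are closed and the summands are truly orthogonal before concluding $M=\hh$.
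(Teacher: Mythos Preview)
Your proof is correct and follows essentially the paper's strategy: orthogonality of the spaces $T^n(\Ker(T^*))$ is read off from Proposition~\ref{pro:analytic-inclusions} (equivalently, Lemma~\ref{lem:kernel-condition}), and the wandering subspace property then drops out of analyticity together with the iterated kernel--range decomposition. The only differences are cosmetic: for (i)$\Rightarrow$(ii) the paper runs the dual kernel-side iteration, proving $\Ker(T^{*n})=\bigoplus_{k<n}M_k$ by induction and noting $\big(\bigcup_n\Ker(T^{*n})\big)^\perp=\RngInf(T)$, while you use the range-side decomposition $\hh=\bigoplus_{k<n}T^k(W)\oplus T^n(\hh)$; and for (ii)$\Rightarrow$(i) the paper expands $Th=\sum_n T^nf_n$ and computes $\langle T\pp{k}f,Th\rangle$ term by term, whereas you package the same computation as the structural observation $T^*(\hh_n)\subseteq\hh_{n-1}$.
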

\begin{proof}	
	If $m=2$, then the theorem follows directly from \cite[Theorem 2.5]{A-C-J-S-2017}.
	Therefore, assume that $m\ge 3$ and define $M_n=T^n(\Ker(T^*))$ for $n\in\zbb_+$.
	
	(i)$\Rightarrow$(ii).  
	We begin with showing that spaces $\{M_n\}_{n=0}^\infty$ are orthogonal to each other.
	Observe that, thanks to Lemma \ref{lem:kernel-condition} and (i), $T$ satisfies the $k$-kernel condition for every $k\in\nbb$.	
	Suppose that $p\in\zbb_+$ and $k\ge p + 1$. 
	Then 
	\begin{align*}
		\langle T^{p}f, T^k g\rangle &= \langle T^*T\pp{p}f, T^{k-p-1}g\rangle =  0, \quad f,g\in\Ker(T^*),
	\end{align*}
	as $T\pp{p}f\in\Ker(T^*)$ due to the $p$-kernel condition.
	Hence $\{M_n\}_{n=0}^\infty$ are orthogonal to each other.
	
	Due to the fact that $T$ is analytic, in order to finish the proof it is sufficient to show that closed linear span of $\{M_n\}_{n=0}^\infty$ coincides with $\hh$.
	Before showing this let us prove that
	\begin{align}
		\label{eq:ort-sum}
		\Ker(T^{*n}) = \bigoplus_{k=0}^{n-1} M_k, \quad n\in\nbb.
	\end{align}
	Indeed, suppose that \eqref{eq:ort-sum} holds for some $n\ge2$. 
	Let $f\in\Ker(T^{*(n+1)})$. 
	Since, the range of $T^n$ is closed, it follows from the kernel-range decomposition that $f=T^{n}h + g$ for some $h\in\hh$ and $g\in\Ker(T^{*n})$.
	Thus
	\begin{align*}
		0 = T^{*(n+1)} f = T^{*} T\pp{n} h + T^{*(n+1)}g = T^{*} T\pp{n}h,
	\end{align*}
	and, hence, by the $n$-kernel condition and due to the fact that $T\pp{n}$ is invertible (see Remark~\ref{rem:invertiblity}) we see that $h\in\Ker(T^*)$.
	Moreover, if $x\in M_n$ and  $z\in\Ker(T^{*n})$, then there exists $y\in\Ker(T^*)$ such that
	\begin{align*}
		\langle x, z\rangle = \langle T^n y, z\rangle = \langle y, T^{*n}z\rangle = 0.
	\end{align*}
	Hence $\Ker(T^{*(n+1)})\subseteq M_n\oplus \Ker(T^{*n})$. 
	In view of this and Proposition \ref{pro:analytic-inclusions} we see that \eqref{eq:ort-sum} holds for $n+1$.
	To complete the induction argument it is enough to repeat a similar argument as above to prove \eqref{eq:ort-sum} holds for $n=2$.
	
	Now, define $A=\bigcup_{n=1}^\infty \Ker(T^{*n})$ and observe that due to the left-invertibility of $T$ we obtain
	\begin{align*}
		A^\perp &= \Big\{ f\in\hh \colon 0=\langle f,h\rangle \text{ for all } h\in \bigcup_{n=1}^\infty \Ker(T^{*n})\Big\} \\
		&= \{ f\in\hh \colon f\in\overline{\Rng(T^n)}=\Rng(T^n) \text{ for all }n\in\nbb\} \\
		&= \RngInf(T).
	\end{align*}
	This combined with \eqref{eq:ort-sum}, \cite[Corollary 2.10]{conway} and $T$ being analytic  implies that closed linear span of $\{M_n\}_{n=0}^\infty$ coincides with $\hh$.
	Thus $T$ admits the Wold-type decomposition.	

	(ii)$\Rightarrow$(i). 
	Let $k\in\nbb$.
	Since $T$ is analytic and admits the Wold-type decomposition, the closed linear span of $\{M_n\}_{n=0}^\infty$ coincides with $\hh$.
	This and the orthogonality of subspaces $\{M_n\}_{n=0}^\infty$ imply that for every $h\in\hh$ there exists a sequence $\{f_n\}_{n=0}^\infty\subseteq\Ker(T^*)$ such that
	\begin{align}
		\label{eq:element-decomposition}
		Th = \sum_{n=0}^\infty T^n f_n = \sum_{n=1}^\infty T^n f_n,
	\end{align}
	where the last equality can be deduced from the fact that $f_0=0$ as $Th$ is orthogonal to $\Ker(T^*)$ due to the kernel-range decomposition.
	
	Observe that for $h\in\hh$ and $f\in\Ker(T^*)$ we have
	\begin{align*}
		\langle T\pp{k} f, Th\rangle 
		&\overset{\eqref{eq:element-decomposition}}= \langle T\pp{k} f, \sum_{n=1}^\infty T^nf_n\rangle \\
		&=  \sum_{n=1}^\infty \langle T\pp{k} f,  T^{n}f_n\rangle 
		=  \sum_{n=1}^\infty \langle T^k f,  T^{n+k}f_n\rangle 
		= 0,
	\end{align*}
	where the last equality follows from the fact that $\{M_n\}_{n=0}^\infty$ are orthogonal to each other.
\end{proof}

The following corollary provides an affirmative answer to \cite[Question 1.2]{A-C-T-2018} in some subclass of operators by establishing a sufficient condition for an expansive $m$-isometric operator to admit the Wold-type decomposition.

\begin{corollary}
	\label{cor:wold-type-answer}
	An expansive $m$-isometric operator which satisfies the $(m-1)$-kernel condition for $m\ge2$ admits the Wold-type decomposition.
\end{corollary}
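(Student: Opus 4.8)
The plan is to reduce the statement to Theorem~\ref{thm:analytic-m-iso} by splitting off the unitary part of $T$ carried by $M:=\RngInf(T)$. The first and most substantial step is to establish condition~(i) of the Wold-type decomposition, that $M$ reduces $T$ to a unitary operator; this is the only place where expansiveness is used. I would first note that $T(M)=M$: the inclusion $T(M)\subseteq M$ is immediate from $T(T^n(\hh))=T^{n+1}(\hh)$, while left-invertibility of $T$ (injectivity together with closed ranges) shows that the unique preimage of a vector of $M$ again lies in every $T^n(\hh)$, hence in $M$. Thus $U_0:=T_{|M}$ is a bijection of $M$ onto itself, and so invertible on $M$.

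Next I would show that $U_0$ is an isometry. For $x\in M$ the map $n\mapsto\|T^nx\|^2$ extends to all of $\zbb$ through $U_0$, and evaluating the quadratic form associated with \eqref{eq:m-iso-condition} on the vector $U_0^nx\in M$ gives $\sum_{p=0}^m(-1)^p\binom{m}{p}\|U_0^{n+p}x\|^2=0$ for every $n\in\zbb$, since $T^pU_0^nx=U_0^{n+p}x$. Hence $q_x(n):=\|U_0^nx\|^2$ has vanishing $m$-th finite difference on $\zbb$, so it agrees with a polynomial in $n$ of degree at most $m-1$. Expansiveness makes $q_x$ nondecreasing on $\zbb$, and being bounded below by $0$ it has a finite limit as $n\to-\infty$, which forces the polynomial to be constant. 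Therefore $\|Tx\|=\|x\|$ on $M$, and the bijection $U_0$ is unitary.

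To upgrade invariance to reducingness I would invoke expansiveness once more. Writing $T$ as an upper-triangular block operator with respect to $\hh=M\oplus M^\perp$, with diagonal blocks $U_0$ and $D$ and upper-right block $B=P_M\,T_{|M^\perp}$, the inequality $\|T(x+y)\|^2\ge\|x+y\|^2$ for $x\in M$, $y\in M^\perp$ reduces, after expanding and using $\|U_0x\|=\|x\|$, to $2\operatorname{Re}\langle U_0x,By\rangle+\|By\|^2+\|Dy\|^2\ge\|y\|^2$. As $x$ varies, $U_0x$ sweeps out all of $M$; choosing $U_0x=-t\,By$ and letting $t\to\infty$ forces $By=0$, so $B=0$. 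Thus $M$ reduces $T$ and $T=U_0\oplus A$ with $U_0$ unitary and $A:=T_{|M^\perp}$.

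It remains to verify condition~(ii). Since $U_0$ is unitary, $T^n(\hh)=M\oplus A^n(M^\perp)$, so $\RngInf(A)=\{0\}$, i.e.\ $A$ is analytic; moreover $A$ is an $m$-isometry as a restriction to a reducing subspace and, by Lemma~\ref{lem:kernel-condition-reducing}, inherits the $(m-1)$-kernel condition. Theorem~\ref{thm:analytic-m-iso} then yields the wandering subspace property $\bigvee\{A^n(\Ker(A^*))\}_{n=0}^\infty=M^\perp$. Finally, because $T^*=U_0^*\oplus A^*$ with $U_0^*$ injective, we get $\Ker(T^*)=\Ker(A^*)\subseteq M^\perp$, hence $T^n(\Ker(T^*))=A^n(\Ker(A^*))$ and the span above equals $M^\perp=\RngInf(T)^\perp$. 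Together with the first step this gives condition~(ii) and completes the Wold-type decomposition. The main obstacle is the first step, specifically the passage from invariance to reducingness: the polynomial–monotonicity argument secures unitarity of $U_0$, and the block computation with expansiveness is what kills the off-diagonal term.
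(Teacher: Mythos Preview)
Your argument is correct, and its overall architecture matches the paper's: split off the part carried by $M=\RngInf(T)$, show that $T_{|M}$ is unitary and $M$ is reducing, then apply Lemma~\ref{lem:kernel-condition-reducing} and Theorem~\ref{thm:analytic-m-iso} to the analytic restriction $A=T_{|M^\perp}$. The difference lies entirely in how condition~(i) is obtained. The paper simply invokes \cite[Proposition~3.4]{shimorin-2001}, which asserts for any expansive operator that $\RngInf(T)$ is reducing and the restriction is unitary; this is a one-line citation and uses only expansiveness, not the $m$-isometric identity. You instead rederive this fact by hand in the present setting: the polynomial argument (vanishing $m$-th finite difference of $n\mapsto\|U_0^nx\|^2$ on $\zbb$, combined with monotonicity from expansiveness and nonnegativity, forcing constancy) exploits the $m$-isometric structure to get unitarity of $T_{|M}$, and the block-matrix computation with $U_0x=-tBy$ then eliminates the off-diagonal term. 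Your route is longer but fully self-contained and shows explicitly how the $m$-isometric condition interacts with expansiveness on $\RngInf(T)$; the paper's route is shorter and, via Shimorin's result, isolates the fact that condition~(i) needs only expansiveness and has nothing to do with the kernel condition or $m$-isometricity.
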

\begin{proof}
	Since for $m=2$ this result is well known (see \cite{shimorin-2001} and \cite[Theorem~2.5]{A-C-J-S-2017}), we limit our proof to the case $m\ge3$.
	Suppose that $T\in\B$ is an expansive $m$-isometry.
    Recall that it follows from \cite[Proposition 3.4]{shimorin-2001} that $\RngInf(T)$ is reducing for $T$ and $T_{|\RngInf(T)}$ is unitary.
    
    Assume that $T$ satisfies the $(m-1)$-kernel condition.
	Observe that the $(m-1)$-kernel condition and $m$-isometricity are invariant under taking a restriction to a reducing subspace (by Lemma \ref{lem:kernel-condition-reducing}  and \cite[page 388]{A-S-1995}, respectively).
	Hence the analytic part of $T$ has the wandering subspace property by Theorem \ref{thm:analytic-m-iso} and therefore $T$ admits the Wold-type decomposition.
\end{proof}

The following result is a generalization of \cite[Theorem 2.5]{A-C-J-S-2017} (cf.\ \cite[Theorem 4.1]{olofsson-2004}) for $m$-isometries for $m\ge2$ and it is proved using similar techniques. 

\begin{theorem}
	\label{thm:analytic-m-iso-shifts}
	Suppose that $T\in\B$ is an analytic $m$-isometry for some $m\ge2$.
	Then the following are equivalent{\em:}
	\begin{enumerate}
		\item $T$ satisfies the $(m-1)$-kernel condition,
		\item there exists a sequence of unitary isomorphisms $\{V_n\}_{n=0}^\infty$ such that 
			\begin{enumerate}
				\item $V_n\colon T^n\Ker(T^*)\to \Ker(T^*)$ for every $n\in\zbb_+$,
				\item $T$ is unitarily equivalent to a unilateral operator valued weighted shift $\ushift{S}$, where $S_n=V_{n+1} T_{|T^n\Ker(T^*)} V_n^{-1}$ for every $n\in\zbb_+$.
			\end{enumerate}
	\end{enumerate}
\end{theorem}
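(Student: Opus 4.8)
The plan is to read off both implications from Theorem~\ref{thm:analytic-m-iso}, supplemented by the elementary observation that every unilateral operator valued weighted shift satisfies the kernel condition. Throughout I write $M_n := T^n(\Ker(T^*))$ for $n\in\zbb_+$.

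\textbf{(i)$\Rightarrow$(ii).} Assuming the $(m-1)$-kernel condition, Theorem~\ref{thm:analytic-m-iso} gives that the spaces $\{M_n\}_{n=0}^\infty$ are mutually orthogonal and, since $T$ is analytic, that $\hh=\bigoplus_{n=0}^\infty M_n$. As $T$ is left-invertible it is injective and bounded below, and $T(M_n)=M_{n+1}$ by definition, so each $T|_{M_n}\colon M_n\to M_{n+1}$ is a topological isomorphism. In particular every $M_n$ has the same Hilbert-space dimension as $\Ker(T^*)$, so a unitary isomorphism $V_n\colon M_n\to\Ker(T^*)$ exists; this is requirement (a). I then define $U\colon\hh\to\ell^2(\Ker(T^*))$ summand by summand via $Ux=e_n(V_nx)$ for $x\in M_n$. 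Orthogonality and completeness of $\{M_n\}_{n=0}^\infty$ together with the unitarity of each $V_n$ force $U$ to be unitary. Evaluating on the generators $e_n(g)$, $g\in\Ker(T^*)$, gives $U^*e_n(g)=V_n^{-1}g\in M_n$, hence $TU^*e_n(g)=TV_n^{-1}g\in M_{n+1}$, so that $(UTU^*)e_n(g)=e_{n+1}(V_{n+1}T|_{M_n}V_n^{-1}g)=e_{n+1}(S_ng)$ with $S_n=V_{n+1}T|_{M_n}V_n^{-1}$. This exhibits $UTU^*$ as the weighted shift $\ushift{S}$, which is requirement (b). It remains to check admissibility of $\{S_n\}_{n=0}^\infty$: each $S_n\in\B[\Ker(T^*)]$ is a composition of the unitaries $V_n^{-1},V_{n+1}$ with the isomorphism $T|_{M_n}$, hence invertible, while $\|S_n\|=\|T|_{M_n}\|\le\|T\|$, so the family is uniformly bounded.

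\textbf{(ii)$\Rightarrow$(i).} Because the $k$-kernel condition is preserved under unitary equivalence, it suffices to verify it for the shift $S=\ushift{S}$ itself. From \eqref{eq:shift-adjoint-basis} and invertibility of the weights one computes $\Ker(S^*)=\{e_0(f)\colon f\in\Ker(T^*)\}$. For such a vector, $S^ne_0(g)=e_n(S_{n-1}\cdots S_0g)$ lies in the $n$-th summand, while $S^{*n}$ maps the $n$-th summand back into the $0$-th, so $S\pp{n}e_0(g)=S^{*n}S^ne_0(g)\in\Ker(S^*)$ for every $n$. Thus $\Ker(S^*)$ is invariant under every $S\pp{n}$, giving the $k$-kernel condition for all $k$, in particular for $k=m-1$; transporting back through the implementing unitary yields the $(m-1)$-kernel condition for $T$. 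Note that this direction uses only that $T$ is unitarily equivalent to a weighted shift, not the $m$-isometry hypothesis.

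The two implications are not individually hard once Theorem~\ref{thm:analytic-m-iso} is available, and the reverse one is essentially formal. I expect the main obstacle to be the bookkeeping in (i)$\Rightarrow$(ii): defining $U$ block-wise and confirming it is genuinely unitary, and then verifying that the weights $S_n$ are invertible and uniformly bounded, so that $UTU^*$ really is an object of the weighted-shift class rather than merely an operator with a shift-like action.
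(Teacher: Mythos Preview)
Your proof is correct and follows essentially the same route as the paper's: for (i)$\Rightarrow$(ii) you invoke Theorem~\ref{thm:analytic-m-iso} to obtain the orthogonal decomposition $\hh=\bigoplus_{n\ge0}M_n$, pick unitaries $V_n\colon M_n\to\Ker(T^*)$ by equality of Hilbert dimensions, and assemble the block-diagonal intertwiner $U$, exactly as in the paper (which defers the last step to \cite[Theorem~2.5]{A-C-J-S-2017}); for (ii)$\Rightarrow$(i) both arguments compute that $S\pp{k}$ preserves the zeroth summand $\{e_0(f)\}=\Ker(S^*)$ and transport this back through the unitary equivalence. Your reverse implication is in fact slightly cleaner than the paper's, since you observe directly that $S^{*n}S^n$ sends the $0$th summand to itself for any shift with invertible weights, without needing the explicit identity $S\pp{k}e_0(f)=e_0(T\pp{k}f)$ (which tacitly uses $V_0=I$).
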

\begin{proof}
	Note that for $m=2$ this result is already proved (see \cite[Theorem 2.5]{A-C-J-S-2017}).
	Therefore, we limit our considerations to the case $m\ge3$.
	For the rest of the proof define $M_n=T^n(\Ker(T^*))$ for all $n\in\zbb_+$.
	
	(i)$\Rightarrow$(ii). 
	It follows from Theorem \ref{thm:analytic-m-iso} that $T$ admits the Wold-type decomposition and the subspaces $\{M_n\}_{n=0}^\infty$ are orthogonal to each other.
	Since for every $n\in\zbb_+$ the operator $T_{|M_n}\colon M_n\to M_{n+1}$ is a linear bijection, then the spaces $\{M_n\}_{n=0}^\infty$ are unitarily equivalent to each other (see \cite[Problem 42]{halmos-1974}).
	Let $V_n\colon M_n\to M_0$ be the above unitary isomorphism for all $n\in\zbb_+$.
	It is now a matter of repeating the same argument as in \cite[Theorem 2.5]{A-C-J-S-2017} to show that $VT=SV$, where $V\colon\hh\to\ell^2(M_0)$ is a unitary diagonal operator with operators $\{V_n\}_{n=0}^\infty$ located on its diagonal and $S$ is the unilateral operator valued weighted shift with weights $\{V_{n+1} T_{|M_n} V_n^{-1}\}_{n=0}^\infty\subseteq\B[M_0]$.
	This completes the proof of this implication.
	
	(ii)$\Rightarrow$(i). 
	Let $k\in\{1,\dots,m-1\}$. 
	It is a matter of performing an easy computation to verify that for $f\in\Ker(T^*)$ we have
	\begin{align*}
		S\pp{k}e_0(f) \overset{\eqref{eq:shift-adjoint-basis}}= e_0(S_0^*\dots S_{k-1}^*S_{k-1}\dots S_0 f) = e_0(T\pp{k}f),
	\end{align*}
	which implies that $T$ satisfies the $k$-kernel condition.
	Hence, the proof is completed.
\end{proof}

Observe that in the class of isometries condition (ii) from Theorem \ref{thm:analytic-m-iso} and condition (ii) from Theorem \ref{thm:analytic-m-iso-shifts} are trivially satisfied. 
Those are well known facts and follow directly from the proof of the Wold decomposition (see \cite[Chapter I, Theorem 1.1]{N-F-2010}).

The characterization from Theorem \ref{thm:analytic-m-iso-shifts} can be used as a tool to determine whether an analytic $m$-isometry is unitarily equivalent to a unilateral operator valued weighted shift.
For example Theorem \ref{thm:analytic-m-iso-shifts} combined with \cite[Theorem~4.9]{J-K-2019} implies that the class of analytic $m$-isometric composition operators on a directed graph with one circuit is essentially different than the class of $m$-isometric unilateral operator valued weighted shifts as there are no $2$-isometric composition operators on a directed graph with one circuit that satisfies the kernel condition.
The following result generalizes this remark to the class of $m$-isometric composition operators on a directed graph with one circuit having one element.

\begin{corollary}
	\label{cor:composition-unitary-equivalence}
	Suppose that $\cf\in\B[L^2(\mu)]$ is a composition operator on a directed graph with one circuit with $\kappa=1$, $\eta\in\nbb\cup\{\infty\}$ and $\cf$ is an $m$-isometry for $m\ge2$.
	Then $\cf$ is not unitarily equivalent to any unilateral operator valued weighted shift.
\end{corollary}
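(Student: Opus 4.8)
The plan is to reduce the statement to the failure of the $(m-1)$-kernel condition and then exploit its invariance under unitary equivalence. For any unilateral operator valued weighted shift $\ushift{S}$ one has $\Ker(S^*)=\{e_0(f):f\in\hh\}$ and, by \eqref{eq:shift-adjoint-basis}, $S\pp{n}e_0(f)=e_0(S_0^*\cdots S_{n-1}^*S_{n-1}\cdots S_0 f)\in\Ker(S^*)$ for every $n\in\nbb$; hence every such shift satisfies the $k$-kernel condition for all $k$. Since the $k$-kernel condition is preserved under unitary equivalence (by the Proposition preceding Theorem~\ref{thm:composition-kernel-condition}), it suffices to prove that the $m$-isometry $\cf$ does \emph{not} satisfy the $(m-1)$-kernel condition: were $\cf$ unitarily equivalent to some shift, it would inherit that condition, a contradiction.

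To analyse this I would apply Theorem~\ref{thm:composition-kernel-condition}, which is available since $m$-isometries are left-invertible. With $\kappa=1$ the vertex $x_1$ is a fixed point of $\phi$ and is the only point of $X$ whose preimage is not a singleton, namely $\phi^{-i}(\{x_1\})=\{x_1\}\cup\{x_{l,j}:1\le l\le\eta,\ 1\le j\le i\}$, while $\phi^{-i}(\{x_{l,j}\})=\{x_{l,j+i}\}$. Consequently the $(m-1)$-kernel condition is, by Theorem~\ref{thm:composition-kernel-condition}, equivalent to $\h$ being constant on the sets $\phi^{-i}(\{x_1\})$ for $i=1,\dots,m-1$, that is, to $\h(x_{l,j})=\h(x_1)=:r$ for all $1\le l\le\eta$ and $1\le j\le m-1$. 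Writing $a_j=\mu(x_{l,j})$ for a fixed branch $l$ and using the discrete formula $\h(x)=\mu(\phi^{-1}(\{x\}))/\mu(x)$, one has $\h(x_{l,j})=a_{j+1}/a_j$ and $r=1+\mu(x_1)^{-1}\sum_{l}\mu(x_{l,1})>1$; thus the $(m-1)$-kernel condition would force $a_1,\dots,a_m$ to be a geometric progression of ratio $r>1$.

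Next I would extract the constraints imposed by $m$-isometry. Realizing $\cf\pp{p}$ as multiplication by $x\mapsto\mu(\phi^{-p}(\{x\}))/\mu(x)$, relation \eqref{eq:m-iso-condition} reads $\sum_{p=0}^m(-1)^p\binom{m}{p}\mu(\phi^{-p}(\{x\}))/\mu(x)=0$ for every $x\in X$. At a branch point $x_{l,j}$, where $\phi^{-p}(\{x_{l,j}\})=\{x_{l,j+p}\}$, this gives $\sum_{p=0}^m(-1)^p\binom{m}{p}\mu(x_{l,j+p})=0$ for all $j\ge1$ and all $l$, so each branch sequence $(\mu(x_{l,j}))_{j\ge1}$ is a polynomial in $j$ of degree at most $m-1$; summing over branches, $b_j:=\sum_{l}\mu(x_{l,j})$ is likewise such a polynomial (the interchange of $\triangle^m$ with an infinite sum when $\eta=\infty$ being justified by boundedness of $\cf$), so its partial sums $S_p:=\sum_{j=1}^p b_j$ form a polynomial of degree at most $m$. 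At the fixed point $x_1$, using $\sum_{p=0}^m(-1)^p\binom{m}{p}=0$, the same relation collapses to $\sum_{p=0}^m(-1)^p\binom{m}{p}S_p=0$; since $\triangle^m$ sends a polynomial of degree at most $m$ to its constant leading term, this single identity forces $\deg S\le m-1$, whence $\deg b\le m-2$. Positivity of the weights then prevents any cancellation of leading coefficients, so in fact every branch sequence $(\mu(x_{l,j}))_{j\ge1}$ has degree at most $m-2$.

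The two conclusions are incompatible, which completes the argument: a sequence that is a polynomial of degree at most $m-2$ satisfies $\triangle^{m-1}a=0$, whereas for a geometric progression $a_j=a_1 r^{j-1}$ one computes $\triangle^{m-1}a_1=(r-1)^{m-1}a_1>0$, using $r>1$ and $a_1=\mu(x_{l,1})>0$. Hence the first $m$ terms of a branch cannot be simultaneously geometric of ratio $r>1$ and polynomial of degree at most $m-2$, so $\cf$ fails the $(m-1)$-kernel condition and is therefore not unitarily equivalent to any unilateral operator valued weighted shift. I expect the main obstacle to be the bookkeeping at the circuit vertex $x_1$: turning the \emph{single} scalar identity produced by \eqref{eq:m-iso-condition} at $x_1$ into the degree drop from $m-1$ to $m-2$ for $b$, justifying the interchange of $\triangle^m$ with the branch sum when $\eta=\infty$, and then transferring the degree bound to each individual branch via positivity.
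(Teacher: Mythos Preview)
Your argument is correct and follows the same overall strategy as the paper: show that the $(m-1)$-kernel condition forces a geometric pattern on the branch weights, while $m$-isometry forces a polynomial pattern of degree at most $m-2$, and these are incompatible. The differences are in how the two ingredients are obtained. The paper imports the degree bound $\deg w_l\le m-2$ directly from \cite[Corollary~2.12]{J-K-2019}, whereas you rederive it from \eqref{eq:m-iso-condition} applied at the branch points and at the circuit vertex $x_1$; your derivation is sound (the Fubini-type interchange when $\eta=\infty$ is justified exactly as you indicate, since $\sum_l\sum_{p}\binom{m}{p}\mu(x_{l,j+p})=\sum_p\binom{m}{p}b_{j+p}<\infty$, and the positivity transfer follows because $(\triangle^{m-1}b)_1=\sum_l(\triangle^{m-1}w_l)_1$ is a convergent series of nonnegative terms). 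For the other direction, the paper reaches the kernel condition via analyticity of $\cf$ (\cite[Corollary~4.3]{J-K-2019}) and Theorem~\ref{thm:analytic-m-iso-shifts}, and then upgrades to the $k$-kernel condition for all $k$ via Lemma~\ref{lem:kernel-condition} to obtain $w_l(j+1)=a^{j}w_l(1)$ for \emph{all} $j$; you instead argue directly that every shift satisfies all kernel conditions and invoke unitary invariance, using only the $(m-1)$-kernel condition to make $a_1,\dots,a_m$ geometric, which already yields $(\triangle^{m-1}a)_1=a_1(r-1)^{m-1}>0$ against $(\triangle^{m-1}a)_1=0$. Your route is slightly more self-contained (it avoids both the analyticity citation and the appeal to Theorem~\ref{thm:analytic-m-iso-shifts}), at the cost of reproving the degree bound; the paper's route is shorter because it leverages that external result.
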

\begin{proof}
	Since $\cf$ is an $m$-isometry, it can be deduced from \cite[Corollary~2.12]{J-K-2019} that for all $i\le\eta$ there exists a real polynomial $w_i$ of degree at most $m-2$ such that 
	\begin{align}
		\label{eq:measure-m-iso-composition}
		\mu(x_{i,j}) = w_i(j),  \quad j\in\nbb.
	\end{align}
	Assume that $\cf$ is unitarily equivalent to unilateral operator valued weighted shift. 
	Since, by \cite[Corollary~4.3]{J-K-2019}, operator $\cf$ is analytic, it follows from Theorem \ref{thm:analytic-m-iso-shifts} that $\cf$ satisfies the $(m-1)$-kernel condition.
	Hence, by Lemma \ref{lem:kernel-condition}, $\cf$ satisfies the $k$-kernel condition for every $k\in\nbb$.
	This combined with \eqref{eq:measure-m-iso-composition}, Theorem \ref{thm:composition-kernel-condition} and \cite[(2.10)]{J-K-2019} implies that 
	\begin{align}
		\label{eq:measure-m-iso-composition-2}
		1+\frac{1}{\mu(x_1)} \sum_{k=1}^\eta w_k(1)= \frac{w_i(j+1)}{w_i(j)}, \quad i\le\eta, j\in\nbb.
	\end{align}

	Set $a=1+\frac{1}{\mu(x_1)} \sum_{k=1}^\eta w_k(1)$ and observe that $a>1$.
	It follows from \eqref{eq:measure-m-iso-composition} and \eqref{eq:measure-m-iso-composition-2} that 
	$w_n(j+1) = a^jw_n(1)$ for $j\in\nbb$. 
	This contradicts the fact that $w_i$ is a polynomial for every $i\le\eta$ and completes the proof.
\end{proof}

Finally, we give two simple examples of applications of Theorem \ref{thm:analytic-m-iso-shifts} to show that certain analytic $m$-isometric operators are unitarily equivalent to unilateral operator valued weighted shifts.

\begin{example}
	Assume $\hh=\C$.
	Let $m\ge 2$ and $s$, $t$ be polynomials in real variable of degree exactly $m-1$ having positive values on $\zbb_+$.  
	Set $\ushift{S}$ and $\ushift{T}$, where $S_n=\sqrt{\frac{s(n+1)}{s(n)}}I$ and $T_n=\sqrt{\frac{t(n+1)}{t(n)}}I$ for $n\in\zbb_+$. 
	It is a direct consequence of Theorem~\ref{thm:unilateral-m-iso} that both $S$ and $T$ are $m$-isometries.
	Moreover, each one of them satisfies the $k$-kernel condition for every $k\in\nbb$ and, hence, due to Lemma \ref{lem:kernel-condition-reducing} so does $A:=S\oplus T$.
	Now, observe that Theorem \ref{thm:analytic-m-iso-shifts} implies that $A$ is unitarily equivalent to operator valued weighted shift.
\end{example}

\begin{example}
	Let $T\in\B[\ell^2]$ be given in the following way
	\begin{align*}
		T(x_0, x_1, \dots) = \Big(\frac{s_0}{2}(x_0 +ix_1), \frac{s_0}{2}(ix_0 -x_1), \frac{s_1}{\sqrt{2}}(x_0 -ix_1), s_2x_2, s_3x_3,\dots  \Big), 
	\end{align*}
	where $(x_0,x_1,\dots)\in\ell^2$ and $s_n=\sqrt{\frac{s(n+1)}{s(n)}}$ for some real polynomial $s$ of degree at most $2$ having positive values on $\zbb_+$.
	It is a matter of performing routine calculations to verify that 
	\begin{align*}
		T^*(x_0, x_1, \dots) = \Big( \frac{s_0}{2}(x_0 -ix_1) + \frac{s_1}{\sqrt{2}}x_2, \frac{s_0}{2}(-ix_0-x_1) +\frac{s_1}{\sqrt{2}}ix_2 , s_2x_3,\dots    \Big)
	\end{align*}
	where $(x_0,x_1,\dots)\in\ell^2$ and 
	\begin{align*}
		\Ker(T^*)= \{(x_0,x_1,\dots)\in\ell^2 : x_0=ix_1 \text{ and } x_n=0 \text{ for } n\ge2\}.
	\end{align*}
	One can check that $T$ is an analytic $3$-isometry by checking \eqref{eq:m-iso-condition} holds on the orthonormal basis and by computing $\RngInf(T)$.
	Since $T$ satisfies the $2$-kernel condition, it follows from Theorem \ref{thm:analytic-m-iso-shifts} that $T$ is unitarily equivalent to a unilateral operator valued weighted shift.
	Moreover, it can be easily verified that weights of the shift can be defined on $\mathbb{C}$ since $\Ker(T^*)$ is a one dimensional Hilbert space. 
\end{example}

\section*{Acknowledgments}

The author would like to thank Zenon Jan Jab\l o\'nski for helpful discussions concerning the subject of the paper.

The author would like to express their gratitude to Professor Jan Stochel for providing the idea of the proof of Proposition \ref{pro:spectral-measure} which significantly improves the presentation of Theorem \ref{thm:unilateral-m-iso}.

The author would also like to thank Sameer Chavan and Md.\ Ramiz Reza for several useful comments and anonymous reviewers for careful reading of the manuscript and suggestions that helped to improve the final version of the paper.

\end{document}